\theoremstyle{plain}
   \newtheorem{theorem}{Theorem}
   \newtheorem{proposition}[theorem]{Proposition}
   \newtheorem{lemma}[theorem]{Lemma}
\theoremstyle{definition}
   \newtheorem{question}{Question}
\newtheorem{example}{Example} 
\theoremstyle{remark}
 \newtheorem{remark}{Remark}
\author[P.~Br\"and\'en]{Petter Br\"and\'en}\thanks{Research supported in part by the G\"oran Gustafsson Foundation. A preliminary version containing some of the results of this paper appeared in the conference proceedings of EuroComb 2009.}    
       \address{Department of Mathematics, 
Stockholm University, 
SE-106 91 Stockholm, Sweden}
\email{pbranden@math.su.se}
\keywords{$M$-convex, jump system, matroid, half-plane property, tropicalization, Puiseux series, Tarski's principle, hive, Horn's conjecture}
\subjclass[2000]{90C27, 30C15, 05B35, 15A42}
\newcommand{\NN}{\mathbb{N}}
\newcommand{\ML}{(M${}_{\tiny \text{loc}}$)}
\newcommand{\zz}{\mathbf{z}}
\newcommand{\xx}{\mathbf{x}}
\newcommand{\yy}{\mathbf{y}}
\newcommand{\JJ}{\mathcal{J}}
\newcommand{\FF}{\mathcal{F}}
\newcommand{\HH}{\mathcal{H}}
\newcommand{\BB}{\mathcal{B}}
\newcommand{\MM}{\mathcal{M}}
\newcommand{\PP}{\mathcal{P}}
\newcommand{\A}{\mathcal{A}}
\newcommand{\ZZ}{\mathbb{Z}}
\newcommand{\QQ}{\mathbb{Q}}
\newcommand{\RR}{\mathbb{R}}
\newcommand{\CC}{\mathbb{C}}
\newcommand{\PR}{\mathbb{R}\{t\}}
\newcommand{\PC}{\mathbb{C}\{t\}}
\renewcommand{\Im}{{\rm Im}}
\renewcommand{\Re}{{\rm Re}}
\def\newop#1{\expandafter\def\csname #1\endcsname{\mathop{\rm
#1}\nolimits}}
\begin{document}
\title[Discrete concavity and the half-plane property]
{Discrete concavity and  the half-plane property}
\begin{abstract}
Murota \emph{et al.} have recently developed a theory of discrete convex analysis which concerns $M$-convex functions on jump systems. We introduce here a family of  
$M$-concave functions arising naturally from polynomials  (over a field of generalized Puiseux series) with prescribed non-vanishing properties. This family contains several of the most studied $M$-concave functions in the literature. In the language of tropical geometry we study the tropicalization of the space of polynomials with the half-plane property, and show that it is strictly contained in the space of $M$-concave functions. We also provide a short proof of Speyer's ``hive theorem'' which he used to give a new proof of Horn's conjecture on eigenvalues of sums of Hermitian matrices. 
\end{abstract}
\maketitle

\section{Introduction and Main Results}
Murota and others have recently developed a theory of discrete convex analysis as a framework to solve combinatorial optimization problems using ideas from continuous optimization, see \cite{KMT,MurotaBook,Murota06,MT}. This theory concerns $M$-convex  functions on discrete structures known as jump systems. The work of Choe \emph{et al}. \cite{COSW} and the author \cite{Br} reveal a somewhat surprising relationship between jump systems and supports of multivariate complex polynomials with prescribed non-vanishing properties. The main purpose of this paper is to further study this correspondence and in particular to show that $M$-concave functions arise as valuations of multivariate polynomials over a field of generalized Puiseux series with prescribed non-vanishing properties, see Theorems \ref{Main1} and \ref{Main2}. Similar techniques and ideas are present in tropical geometry.  In particular in 
\cite{Sp} where a correspondence between Vinnikov curves over a field of Puiseux series and discrete concave functions known as hives was used to give an alternative proof of Horn's conjecture on eigenvalues of sums of Hermitian matrices. 
Our results show that the tropicalization of the space of polynomials with the half-plane property is strictly contained in the space of $M$-concave functions. 
In Section \ref{HIVES} we give a short proof, based on a result of Hardy and Hutchinson,  of  Speyer's ``hive theorem''. We also prove that a natural extension of Speyer's theorem to higher dimensions is false. \\[-1ex]

Jump systems were introduced by Bouchet and Cunningham \cite{BC} as a generalization of matroids. 
Let $\alpha,\beta \in \ZZ^n$ and  $|\alpha|=\sum_{i=1}^n|\alpha_i|$. A 
{\em step from $\alpha$  to $\beta$} is 
 an $s \in \ZZ^n$ such that $|s|=1$ and  $|\alpha+s-\beta|=|\alpha-\beta|-1$. 
If $s$ is a step from $\alpha$ to $\beta$ we write $\alpha \stackrel s {\rightarrow} \beta$. A set $\JJ \subseteq \ZZ^n$ is called a {\em jump system} if it respects the following axiom.  
\begin{itemize}
\item[(J1):] If 
$\alpha,\beta \in \JJ$, $\alpha \stackrel s {\rightarrow} \beta$ and  $\alpha+s \notin \JJ$, then there is 
a step $t$ such that $\alpha+s \stackrel t {\rightarrow} \beta$ and $\alpha+s+t \in \JJ$. 
\end{itemize}
Jump systems for which $\JJ \subseteq \{0,1\}^n$ are known as $\Delta$-{\em matroids}, and $\Delta$-matroids for which $|\alpha|=|\beta|$ for all $\alpha, \beta \in \JJ$ coincide with sets of bases of matroids.

A {\em constant parity set} is a set  $\A \subseteq \ZZ^n$ for which $|\alpha|-|\beta|$ is even for all 
$\alpha, \beta \in \A$. Geelen proved that for  constant parity sets the following axiom is equivalent to (J1), see \cite{Murota06}. 
\begin{itemize}
\item[(J2):] If 
$\alpha,\beta \in \JJ$ and $\alpha \stackrel s {\rightarrow} \beta$, then  there is 
a step $t$ such that $\alpha+s \stackrel t {\rightarrow} \beta$, $\alpha+s+t \in \JJ$ and $\beta-s-t \in \JJ$. 
\end{itemize}
Let $\JJ \subseteq \ZZ^n$. A function $f : \JJ \rightarrow \RR$ is $M$-{\em concave} if it respects the next axiom. 
\begin{itemize} 
\item[(M):] If 
$\alpha,\beta \in \JJ$ and $\alpha \stackrel s {\rightarrow} \beta$, then  there is 
a step $t$ such that $\alpha+s \stackrel t {\rightarrow} \beta$, $\alpha+s+t \in \JJ$, $\beta-s-t \in \JJ$ and 
$
f(\alpha)+f(\beta) \leq f(\alpha+s+t)+f(\beta-s-t).
$
The set $\JJ$ is the \emph{support} of $f$. 
\end{itemize}
This concept generalizes that of {\em valuated matroids} \cite{DW}, which are $M$-concave functions with support contained in $\{ \alpha \in \{0,1\}^n : \alpha_1+\cdots + \alpha_n=r\}$ for some $r$. Note that if $f : \JJ \rightarrow \RR$ satisfies (M) then $\JJ$ is a constant parity jump system. Algorithms for maximizing $M$-concave functions on constant parity jump systems have recently been developed in \cite{MT}.

Choe, Oxley, Sokal and Wagner \cite{COSW} initiated the study of combinatorial properties of {\em polynomials with the half-plane property} (HPP-polynomials). Let $H \subset \CC$ be an open half-plane with boundary containing the origin. A multivariate polynomial with complex 
coefficients is  $H$-{\em stable} if it is nonzero whenever all the variables are in $H$. Moreover  if $P$ 
is $H$-stable for some $H$, then $P$ is said to have the {\em half-plane property}. Such polynomials have an intriguing combinatorial structure. Let $P = \sum_{\alpha \in \NN^n} a(\alpha) \zz^\alpha$ be a polynomial in $\CC[\zz]$, where $\zz=(z_1,\ldots, z_n)$ and $\zz^\alpha= z_1^{\alpha_1}\cdots z_n^{\alpha_n}$. The {\em support} of 
$P$ is  $\supp(P)= \{ \alpha \in \NN^n : a(\alpha)\neq 0\}$. A polynomial $P \in K[z_1,\ldots, z_n]$, where $K$ is a field,   is called {\em multiaffine} if $\supp(P) \subseteq \{0,1\}^n$, i.e., if  it can be written as 
$$
P(\zz)= \sum_{S \subseteq \{1,\ldots, n\}} a(S)\prod_{j \in S}z_j,  
$$
where $a(S) \in K$ for all $S \subseteq \{1,\ldots, n\}$.  
A polynomial is {\em homogeneous} if $|\alpha|=|\beta|$ for all $\alpha$, $\beta$ in its support. 
\begin{theorem}[Choe \emph{et al.}, \cite{COSW}]\label{matsup} Let $P \in \CC[\zz]$ be a homogeneous and multiaffine polynomial with the half-plane property. Then $\supp(P)$ is the set of bases of a matroid. 
\end{theorem}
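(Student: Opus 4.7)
My plan is to verify the matroid basis-exchange axiom for $\FF := \supp(P)$. Multiaffineness and homogeneity immediately force $\FF \subseteq \{0,1\}^n$ and $|A| = |B|$ for all $A, B \in \FF$; what remains is: for every $A, B \in \FF$ and every $i \in A \setminus B$, to produce a $j \in B \setminus A$ with $(A \setminus \{i\}) \cup \{j\} \in \FF$. By applying an affine substitution $z_k \mapsto \gamma_k z_k + \delta_k$ in each coordinate, which preserves the support, multiaffineness, and the half-plane property, I may assume that $P$ is stable in the standard sense, i.e., nonvanishing whenever each $z_k$ lies in the upper half-plane $\{z \in \CC : \Im z > 0\}$.

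My preferred route is through the Newton polytope $N := \operatorname{conv}(\supp P) \subset \RR^n$, chaining three facts. (i) Every edge of $N$ is parallel to a vector of the form $e_i - e_j$. (ii) A $0/1$-polytope lying in the constant-sum hyperplane $\sum_k x_k = r$ and having edges only of this form is the basis polytope of a matroid $M$ (the well-known characterization of matroid polytopes). (iii) The lattice points of a matroid basis polytope are exactly its vertices. Since $\supp P$ consists of lattice points of $N$ (by multiaffineness) and contains every vertex of $N$, combining (i)--(iii) identifies $\supp P$ with the set of bases of $M$, as required.

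The technical heart is (i), the edge condition, which reflects stability. I would prove it via the standard initial-form argument: given an edge $[\alpha,\beta]$ of $N$ selected by a linear functional $\ell$, the substitution $z_k \mapsto t^{\ell(e_k)} z_k$ preserves stability for real $t>0$, and the leading coefficient in $t$ as $t \to \infty$ is the \emph{initial form} $\operatorname{in}_\ell(P)$, itself stable (as a limit of stable polynomials) and supported on $[\alpha,\beta] \cap \ZZ^n$. If $\beta - \alpha$ were not parallel to any $e_i - e_j$, partitioning the active coordinates according to the signs of $\beta - \alpha$ and identifying variables within each group (using that multiaffine stable polynomials remain stable under coordinate specialization into the closed upper half-plane) produces a stable polynomial in two variables whose support violates a Hurwitz-type gap theorem. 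The principal obstacle is to execute this reduction so that the flanking coefficients $a(\alpha)$ and $a(\beta)$ both survive and the contradicting ``missing middle'' coefficient is genuinely forced; an alternative, more direct route bypasses the Newton polytope and operates on $P$ itself, iteratively applying partial differentiation, boundary specialization, and variable identification until the sum $\sum_{j \in B \setminus A} a((A \setminus \{i\}) \cup \{j\})$ arises as an isolated coefficient of a stable polynomial whose vanishing contradicts stability.
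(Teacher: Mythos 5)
The paper itself gives no proof of Theorem~\ref{matsup}: it is quoted from \cite{COSW} as background (and generalized, again without proof, in Theorem~\ref{jumpy}). So the relevant comparison is with the original argument of Choe--Oxley--Sokal--Wagner, which verifies the basis-exchange axiom directly: after rotating to a standard half-plane and invoking their same-phase theorem to make all coefficients nonnegative, they apply differentiations, boundary specializations and identifications of variables to reduce a putative exchange failure to a small stable polynomial with an impossible support --- essentially your ``alternative, more direct route''. Your primary route through the Newton polytope $N$ and the Gelfand--Goresky--MacPherson--Serganova characterization of matroid base polytopes by edge directions is genuinely different, and it does close: the ``principal obstacle'' you flag does not actually arise. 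Indeed, for an edge $[\alpha,\beta]$ of $N$ the initial form $\operatorname{in}_\ell(P)$ is supported on $\supp(P)\cap[\alpha,\beta]\cap\ZZ^n$; the endpoints $\alpha,\beta$ are vertices of $N=\operatorname{conv}(\supp P)$ and hence automatically lie in $\supp(P)$, and since $\beta-\alpha\in\{0,\pm1\}^n$ is primitive the segment has no interior lattice points, so $\operatorname{in}_\ell(P)=a_\alpha\zz^\alpha+a_\beta\zz^\beta$ exactly, with both coefficients nonzero. Writing $\beta-\alpha=e_S-e_T$ with $|S|=|T|=s$ (equality by homogeneity), dividing out the monomial $\zz^{\alpha\wedge\beta}$ and identifying the variables within $S$ and within $T$ yields a stable polynomial $a_\alpha w_1^{s}+a_\beta w_2^{s}$. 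For $s\ge 2$ this always vanishes somewhere on $H_0\times H_0$: one needs $(w_2/w_1)^s=-a_\alpha/a_\beta$, the set $\{w_2/w_1: w_1,w_2\in H_0\}$ is all of $\CC\setminus(-\infty,0]$, and at most one of the $s\ge 2$ distinct $s$-th roots of $-a_\alpha/a_\beta$ can lie on the excluded ray. This contradiction forces $s=1$, which is your (i); together with (ii) and (iii) (every lattice point of a $0/1$-polytope is a vertex) the proof is complete. Two small repairs are needed in the write-up. First, the normalizing substitution must be a pure rotation $z_k\mapsto e^{i\theta}z_k$, not a general affine map: a shift $z_k\mapsto z_k+\delta_k$ destroys the support and the homogeneity you rely on (it costs nothing to drop it, since the half-planes here have boundary through the origin). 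Second, the bivariate ``gap'' fact should be proved as above rather than cited vaguely; note that the univariate specialization $w_1=1$ alone is insufficient when $s=2$, since both roots of $a+bw^2$ may be real.
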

For arbitrary multivariate complex HPP-polynomials  Theorem \ref{matsup} generalizes naturally. 
\begin{theorem}[Br\"and\'en, \cite{Br}]\label{jumpy}
 If $P \in \CC[\zz]$ has the half-plane property then $\supp(P)$ is a jump system.  
\end{theorem}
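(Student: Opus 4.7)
My plan is to reduce to the multiaffine case via polarization and extract the jump system property from a $\Delta$-matroid structure. First, after a diagonal scaling $z_j\mapsto c_jz_j$ (which preserves $\supp(P)$), I may assume $P$ is stable with respect to the open upper half-plane $H=\{z:\Im z>0\}$.

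Let $d_i=\deg_{z_i}P$ and $N=d_1+\cdots+d_n$. Introduce new variables $z_{i,k}$, $1\le k\le d_i$, and form the polarization $\Pi P$ via the substitution $z_i^k\mapsto\binom{d_i}{k}^{-1}e_k(z_{i,1},\ldots,z_{i,d_i})$. Then $\Pi P$ is multiaffine, invariant under the block permutation action of $G=\sym_{d_1}\times\cdots\times\sym_{d_n}$, and is $H$-stable if and only if $P$ is, by iterated application of the Grace--Walsh--Szeg\H{o} coincidence theorem. A known extension of Theorem~\ref{matsup} to the non-homogeneous multiaffine case (reducible to two variables by specializations and partial differentiations, both of which preserve $H$-stability) yields that $\supp(\Pi P)\subseteq\{0,1\}^N$ is a $\Delta$-matroid: for any $A,B\in\supp(\Pi P)$ and any $x\in A\triangle B$, there exists $y\in A\triangle B$ with $A\triangle\{x,y\}\in\supp(\Pi P)$.

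Let $\pi:\{0,1\}^N\to\NN^n$ be the block-cardinality map, so that $\pi(\supp(\Pi P))=\supp(P)$. To verify (J1), take $\alpha,\beta\in\supp(P)$ with $\alpha\stackrel{s}{\rightarrow}\beta$ where $s=e_i$ (the case $s=-e_i$ is symmetric), and suppose $\alpha+s\notin\supp(P)$. Using the $G$-invariance of $\supp(\Pi P)$, choose lifts $A,B\in\supp(\Pi P)$ that are \emph{block-nested}: for every block $j$, one of $A\cap\text{block}_j$ and $B\cap\text{block}_j$ is contained in the other, the inclusion being determined by the sign of $\beta_j-\alpha_j$; this is achievable because one can permute each block independently. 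Pick $x\in(B\setminus A)\cap\text{block}_i$ (possible since $\alpha_i<\beta_i$) and apply the symmetric exchange axiom to obtain $y\in A\triangle B$ with $A\triangle\{x,y\}\in\supp(\Pi P)$. The case $y=x$ would project to $\alpha+s\in\supp(P)$, contradicting the hypothesis. If $y\in B\setminus A$ lies in block $j$, then $\alpha_j<\beta_j$ by nestedness, the projection is $\alpha+e_i+e_j$, and we take $t=e_j$. If $y\in A\setminus B$ lies in block $j$, nestedness forces $j\ne i$ and $\alpha_j>\beta_j$; the projection is $\alpha+e_i-e_j$ and we take $t=-e_j$. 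In either case $t$ is a step with $\alpha+s\stackrel{t}{\rightarrow}\beta$ and $\alpha+s+t\in\supp(P)$, establishing (J1).

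\textbf{Main obstacle.} The substantive input is the $\Delta$-matroid result for multiaffine HPP polynomials; once this is available, the descent to the jump system axiom via the symmetric block projection is essentially formal, but it depends crucially on block symmetry to produce the nested lifts. Without that symmetry, projections of $\Delta$-matroids need not be jump systems, so the polarization -- which is the only reason the full symmetric group acts -- is doing the real work.
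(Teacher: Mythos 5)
The paper does not prove this statement; it imports it from \cite{Br}, and the two reduction steps you use are exactly the ones the paper records separately as Proposition \ref{polsystem}(1) (cited to \cite{KS}) and Proposition \ref{polarizeit} (Grace--Walsh--Szeg\H{o}). Your handling of those reductions is correct: the polarization has support equal to the full fiber $\PP(\supp(P))$, it is block-symmetric, and your nested-lift argument deducing (J1) for $\pi(\supp(\PP(P)))$ from the symmetric exchange axiom for $\supp(\PP(P))$ checks out, including the parity case $y=x$ and the case $y\in B\setminus A$ lying in block $i$ (where $\beta_i\ge\alpha_i+2$ makes $t=e_i$ a legitimate step). One cosmetic point: to normalize to $H_0$ you should rotate all variables by a \emph{common} $e^{i\theta}$, not an arbitrary diagonal scaling, since a half-plane with boundary through the origin is carried to $H_0$ by a single rotation.

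The genuine gap is the step you label as ``a known extension of Theorem \ref{matsup} to the non-homogeneous multiaffine case.'' That extension --- the support of an arbitrary multiaffine polynomial with the half-plane property satisfies the symmetric exchange axiom of a $\Delta$-matroid --- is not a known prior result; it is precisely the content of the theorem you are asked to prove (the multiaffine case is equivalent to the general case via the polarization you set up, so you have reduced the theorem to itself). It does not follow from the homogeneous result of \cite{COSW} by ``specializations and partial differentiations'': those operations preserve homogeneity or reduce degree uniformly, whereas the $\Delta$-matroid axiom must produce parity-changing exchanges ($y=x$, i.e., $|A\triangle\{x\}|=|A|\pm1$) that are invisible in any homogeneous reduction; nor can one homogenize a multiaffine polynomial while staying multiaffine and half-plane stable. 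A complete proof must establish this exchange property directly, e.g., along the lines of \cite{Br}: normalize to the right half-plane, use the coefficient-inversion $P\mapsto z_jP(\ldots,z_j^{-1},\ldots)$ and limits $z_j\to 0,\infty$ to reduce to $A=\emptyset$, $B=A\triangle B$, and then derive a contradiction from the inequality of Proposition \ref{SR} (or an inductive degree argument) when $\partial_xP(0)=\partial_x\partial_yP(0)=0$ for all $y$. As it stands, your writeup proves only the formal equivalence between the multiaffine statement and the general one, not the statement itself; relatedly, your closing remark that the polarization ``is doing the real work'' has it backwards --- the real work is the multiaffine $\Delta$-matroid theorem you assumed.
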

\begin{remark}
Let $H_0$ be the open upper half-plane. A univariate polynomial with real coefficients is $H_0$-stable if and only if all its zeros are real. Moreover, a multivariate polynomial $P$ with real coefficients is $H_0$-stable if and only if all its zeros along any line with positive slope are real i.e., if all zeros of the polynomial $s \mapsto P(\xx +s\yy)$ are real for all $\xx \in \RR^n$ and $\yy \in (0,\infty)^n$. Hence $H_0$-stability is a natural generalization of real-rootedness.
\end{remark}
%
%

In order to see how $M$-concave functions arise from HPP-polynomials we need to enlarge the field and consider HPP-polynomials over a field with a valuation.  The real field, $\PR$,  of  {\em (generalized) Puiseux series} consists of formal series of the form 
$$
x(t)=\sum_{-k \in A} a_{k}t^{k}
$$
where $A \subset \RR$ is well ordered and $a_k \in \RR$ for all $-k \in A$. The complex field  
of generalized Puiseux series is  $$\PC=\{z=x+iy=\Re(z)+i\Im(z): x,y \in \PR\}.$$ Define the {\em valuation}  $\nu : \PC \rightarrow \RR\cup\{-\infty\}$ to be the map which takes a Puiseux series to its leading exponent, where by convention $\nu(0)=-\infty$. The reason for not choosing the common field of Puiseux series is that we want the valuation to have real values as opposed to rational values. 

A real generalized Puiseux series, $x$, is {\em positive} ($x>0$) if its leading coefficient is positive.  Let $\theta \in \RR$ and  
$H_\theta=\{ z \in \PC : \Im(e^{i\theta}z) >0 \}$ be a half-plane.  A polynomial 
$P \in \PC[\zz]$ is $H_\theta$-{\em stable} if $P \neq 0$ whenever all variables are in $H_\theta$, and it has the half-plane property if it is $H_\theta$-stable for some $\theta \in \RR$.  

The field $\PC$ is algebraically closed and $\PR$ is real closed, see \cite{Ray}. Theorems known to hold for $\RR$ or $\CC$ are typically translated to concern $\PR$ or $\PC$ via Tarski's Principle, see \cite{Sp,Sw} and the references therein. 
\begin{theorem}[Tarski's Principle]
Let $S$ be an elementary statement in the theory of real closed fields. If $S$ is true for one real closed field then $S$ is true in all real closed fields. 
\end{theorem}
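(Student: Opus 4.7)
The plan is to derive Tarski's Principle from the fact that the first-order theory $\mathrm{RCF}$ of real closed fields admits \emph{quantifier elimination} in the language of ordered rings $\{+,\cdot,-,0,1,<\}$, and then to combine this with the observation that $\mathrm{RCF}$ has a common prime substructure (the real algebraic numbers) sitting inside every real closed field. Together these imply that $\mathrm{RCF}$ is complete, which is exactly the assertion that every sentence $S$ has the same truth value in every real closed field.

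First I would establish quantifier elimination. By induction on formula complexity, it suffices to show that every formula of the shape $\exists x\, \varphi(x, y_1, \ldots, y_m)$, with $\varphi$ a Boolean combination of polynomial (in)equalities in the displayed variables, is equivalent modulo $\mathrm{RCF}$ to a quantifier-free formula in $y_1, \ldots, y_m$. The classical route is via Sturm's theorem: the number of real roots of a univariate polynomial in $x$ whose coefficients are polynomials in the parameters $y_i$ is computed by a sign-variation count in a Sturm sequence, and analogous devices (resultants, subresultants, sign determination of polynomial families, Thom encoding) handle systems of simultaneous (in)equalities in $x$. Each such count produces polynomial expressions in the $y_i$, so the output is genuinely quantifier-free in the parameters.

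Once quantifier elimination is available, the conclusion is immediate. Given a sentence $S$, quantifier elimination yields a quantifier-free sentence $\psi$ with $\mathrm{RCF} \vdash S \leftrightarrow \psi$. A quantifier-free sentence with no free variables is a Boolean combination of sign conditions on integer polynomials evaluated at no variables, that is, sign conditions on concrete integers, whose truth values are the same in any ordered ring extending $\mathbb{Z}$. Hence $\psi$, and therefore $S$, has the same truth value in every real closed field.

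The hard part is unquestionably establishing quantifier elimination itself; this is the substance of Tarski's decision procedure and requires genuine real-algebraic input (essentially, an effective real Nullstellensatz in one variable). Modern treatments streamline the argument via Collins' cylindrical algebraic decomposition or the Basu--Pollack--Roy sign-determination framework. A more purely model-theoretic route, due to A.~Robinson, deduces completeness of $\mathrm{RCF}$ from model completeness, which in turn follows from the Artin--Schreier characterization of real closed fields together with the observation that the ordered field of real algebraic numbers embeds uniquely in every real closed field.
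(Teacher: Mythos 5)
The paper does not prove this statement at all: Tarski's Principle is quoted as classical background, with the reader referred to Speyer \cite{Sp} and Swan's expository notes \cite{Sw} "and the references therein." Your outline is exactly the standard argument that those references develop, so there is no conflict of approach -- only a difference in how much is actually carried out. Your logical skeleton is correct and complete at the level of structure: quantifier elimination for $\mathrm{RCF}$ in the language of ordered rings, plus the observation that a quantifier-free \emph{sentence} is a Boolean combination of sign conditions on closed terms (i.e.\ on concrete integers), which have the same truth value in every real closed field because each one contains $\mathbb{Z}$ as an ordered subring; this yields completeness of $\mathrm{RCF}$, which is the asserted transfer principle. (Strictly speaking the appeal to the real algebraic numbers as a common prime model is not needed for this last step -- the prime ordered ring $\mathbb{Z}$ suffices once the sentence is quantifier-free -- but invoking it does no harm and is what one needs for the model-completeness route you mention.) The one substantive caveat is the one you flag yourself: all of the mathematical content lives in the quantifier-elimination step, and your treatment of it is a pointer to Sturm sequences, subresultants, and sign determination rather than an argument. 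As submitted, this is a correct proof outline with the central lemma outsourced to the literature -- which is defensible here, since the paper itself outsources the entire theorem.
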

The \emph{tropicalization}, $\trop(P)$,  of a polynomial  $P= \sum_{\alpha \in \NN^n}a_\alpha(t)\zz^\alpha \in \PC[\zz]$ is the map 
$\trop(P) : \supp(P) \rightarrow \RR$ defined by $\trop(P)(\alpha)= \nu(a_\alpha(t))$.

We may now state our first main result. 
\begin{theorem}\label{Main1}
Let $P= \sum_{\alpha \in \NN^n}a_\alpha(t)\zz^\alpha \in \PC[\zz]$ and suppose that $\supp(P)$ has constant parity.  If $P$ has the half-plane property then $\trop(P)$ is an $M$-concave function. 
\end{theorem}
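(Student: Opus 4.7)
The plan is to reduce to the complex case via valuation and apply Theorem~\ref{jumpy}. By Tarski's Principle, Theorem~\ref{jumpy} transfers to $\PC$, so $\supp(P)$ is a jump system; with the constant-parity hypothesis, it satisfies (J2). For $\lambda \in \RR^n$, the substitution $z_i \mapsto t^{\lambda_i} z_i$ multiplies each variable by a positive Puiseux series and hence preserves $H_\theta$-stability, so $P_\lambda(\zz) := P(t^{\lambda_1}z_1, \ldots, t^{\lambda_n} z_n) \in \PC[\zz]$ has the half-plane property, and its coefficient of $\zz^\gamma$ has valuation $g_\lambda(\gamma) := f(\gamma) + \langle \lambda, \gamma\rangle$, where $f = \trop(P)$. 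Set $M_\lambda = \max_\gamma g_\lambda(\gamma)$ and let $L_\lambda \in \CC[\zz]$ be the polynomial whose coefficients are the leading coefficients of $t^{-M_\lambda} P_\lambda$. Specializing $t$ to a large real $T$, the normalized complex polynomials $T^{-M_\lambda} P_\lambda|_{t=T}$ are HPP over $\CC$ and converge coefficient-wise to $L_\lambda$; by Hurwitz's theorem $L_\lambda$ is HPP over $\CC$. Its support is $\JJ_\lambda := \arg\max g_\lambda$, which by Theorem~\ref{jumpy} is a (constant-parity) jump system.

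To deduce (M), fix $\alpha, \beta \in \supp(P)$ with $\alpha \stackrel{s}{\rightarrow} \beta$ and choose $\lambda$ with $\alpha, \beta \in \JJ_\lambda$. Applying (J2) to $\JJ_\lambda$ gives a step $t$ with $\alpha+s \stackrel{t}{\rightarrow} \beta$ and $\alpha+s+t, \beta-s-t \in \JJ_\lambda$. Since all four points share the $g_\lambda$-value $M_\lambda$ and $(\alpha+s+t)+(\beta-s-t) = \alpha+\beta$, we obtain
$$
f(\alpha)+f(\beta) = f(\alpha+s+t)+f(\beta-s-t),
$$
giving (M) with equality.

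The main obstacle is the existence of such a $\lambda$: the requirement $\alpha, \beta \in \JJ_\lambda$ forces both points to lie on the upper concave envelope of $f$ restricted to $\supp(P)$. I expect to handle the off-envelope case by establishing concave-extensibility of $\trop(P)$ as a separate lemma. Since restrictions of $P$ to one-parameter directions inherit the half-plane property (differentiation and boundary specialization preserve $H_\theta$-stability), and univariate HPP polynomials have tropically log-concave coefficients via Newton's inequality $a_k^2 \geq a_{k-1}a_{k+1}$, one obtains discrete concavity of $\trop(P)$ along every lattice line within $\supp(P)$. Together with the jump-system structure of $\supp(P)$, this yields concave-extensibility, and thus the required $\lambda$ in all cases, completing the argument.
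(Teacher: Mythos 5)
Your argument is complete only for pairs $\alpha,\beta$ that lie in a common tilted argmax set $\JJ_\lambda=\arg\max\bigl(\trop(P)+\langle\lambda,\cdot\rangle\bigr)$; for such pairs the deduction of (M) with equality from (J2) is correct. The gap you flag at the end is real, and the repair you propose does not work, for two separate reasons. First, concave-extensibility of $f=\trop(P)$ does not produce the required $\lambda$: putting $\alpha$ and $\beta$ in a common $\JJ_\lambda$ requires them to lie on a \emph{common face} of the concave envelope, which is much stronger than each lying on the envelope. Already for $n=1$, $\supp(P)=\{0,2,4\}$, $f(0)=f(4)=0$, $f(2)=10$, the function is concave-extensible, yet no $\lambda$ places $0$ and $4$ in the same argmax set, so your method says nothing about that pair even though it is exactly the kind of pair (M) constrains. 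Second, the implication ``concave along lattice lines, plus jump-system support, implies concave-extensible, implies (M)'' is false at both steps: on $\Delta_2$ the function $h(2,0,0)=10$, $h(1,1,0)=h(1,0,1)=5$, $h(0,2,0)=h(0,0,2)=0$, $h(0,1,1)=1/2$ is concave along every lattice line and concave-extensible, but violates the rhombus inequality $h(1,1,0)+h(1,0,1)\geq h(2,0,0)+h(0,1,1)$, hence is not $M$-concave. The exchange inequality in (M) carries genuinely more information than any envelope or line-restriction analysis, and that is precisely the content of the theorem. (Two secondary issues: generalized Puiseux series are formal, so ``specializing $t$ to a large real $T$'' is undefined --- the fact that the initial form of an $H$-stable polynomial over $\PC$ is $H$-stable must be argued differently, e.g.\ via Propositions \ref{polarizeit} and \ref{SR} as the paper does in its Fano-matroid proposition; and it is not clear how to isolate the coefficients of $P$ along an arbitrary lattice line by stability-preserving operations, so even the line-concavity claim is unproved.)

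For contrast, the paper proceeds by polarization (Propositions \ref{polsystem} and \ref{polarizeit}) to the multiaffine case, invokes Murota's local characterization \ML{} to reduce (M) to pairs with $|\alpha-\beta|=4$, and then reduces by stability-preserving operations to a four-variable multiaffine polynomial, where Proposition \ref{SR} produces a quadratic in $x$ with nonpositive discriminant; the one-parameter tilt $z_3\mapsto t^{\lambda}z_3$ --- the same device you apply globally --- is then chosen so that failure of the exchange inequality would force the discriminant to be positive. If you wish to salvage the global argmax approach, you would need a converse ``maximizer theorem'' asserting that a function all of whose tilted argmax sets (together with those of suitable restrictions and reflections) are constant-parity jump systems is $M$-concave; results of this type exist for valuated ($\Delta$-)matroids \cite{Murota97}, but you neither state nor prove one, and the naive version cannot even see the value of $f$ at points that occur in no argmax set.
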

\begin{remark}
An important special case of Theorem \ref{Main1} is when we restrict to the class of homogeneous multiaffine 
polynomials, $P(\zz)= \sum_{B \subseteq \{1,\ldots, n\}} a_B(t) \zz^B \in \PC[\zz]$. Then Theorem \ref{Main1} says that the function $\trop(P)$ is a valuated matroid whenever $P$ is a HPP-polynomial.
\end{remark}

Within the class of constant parity jump systems there are those of {\em constant sum}, i.e., $|\alpha|=|\beta|$ for all $\alpha, \beta \in \JJ$. Such jump systems are known to coincide with the set of integer points of 
{\em integral base polyhedra}, see \cite{MurotaBook}.  If $\alpha = (\alpha_1,\ldots, \alpha_j, \ldots, \alpha_n) \in \RR^n$ let $\pi_j(\alpha)= (\alpha_1,\ldots, \alpha_{j-1},\alpha_{j+1}, \ldots, \alpha_n)$.  The {\em projection} of a set $\A \in \ZZ^n$ along a coordinate $j$ is 
$
\pi_j(\A)=\{ \pi_j(\alpha) : \alpha \in \A\}. 
$
The sets that are projections of constant sum jump systems are known to coincide with the set of 
integer points of {\em generalized integral polymatroids}. Such jump systems can be characterized as sets $\JJ \subseteq \ZZ^n$ satisfying the next axiom, see \cite{MurotaBook}. 
\begin{itemize}
\item[(J${}^\natural$):] If 
$\alpha,\beta \in \JJ$ and $\alpha \stackrel s {\rightarrow} \beta$ then  \\
(i) $\alpha +s \in \JJ$ and $\beta -s \in \JJ$, or \\
(ii) there is a step $t$, $\alpha+s \stackrel t {\rightarrow} \beta$ such that   $\alpha+s+t \in \JJ$ and $\beta-s-t \in \JJ$. 
\end{itemize}
Let $\JJ \subseteq \ZZ^n$. A function $f: \JJ \rightarrow \RR$ is $M^\natural$-{\em concave} if it respects the next axiom. 
\begin{itemize}
\item[(M${}^\natural$):] If 
$\alpha,\beta \in \JJ$ and $\alpha \stackrel s {\rightarrow} \beta$ then  \\
(i) $\alpha +s \in \JJ$, $\beta -s \in \JJ$ and 
$
f(\alpha) + f(\beta) \leq f(\alpha+s)+f(\beta-s)$, or \\
(ii) there is a step $t$, $\alpha+s \stackrel t {\rightarrow} \beta$ such that   $\alpha+s+t \in \JJ$ and $\beta-s-t \in \JJ$ and 
$
f(\alpha)+f(\beta) \leq f(\alpha+s+t)+f(\beta-s-t).
$
\end{itemize}
\begin{theorem}\label{Main2}
Let $P= \sum_{\alpha \in \NN^n}a_\alpha(t)\zz^\alpha \in \PR[\zz]$, with $a_\alpha(t) \geq 0$ for all $\alpha \in \NN^n$.  If $P$ is $H_0$-stable then $\supp(P)$ is the set of integer points of an integral generalized  polymatroid and  $\trop(P)$ is an $M^\natural$-concave function. 
\end{theorem}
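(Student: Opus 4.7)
The approach is to reduce Theorem \ref{Main2} to Theorem \ref{Main1} via homogenization. Let $d = \deg P$ and form
\[
Q(z_0, z_1, \ldots, z_n) := z_0^d\, P(z_1/z_0, \ldots, z_n/z_0) = \sum_{\alpha \in \supp(P)} a_\alpha(t)\, z_0^{d - |\alpha|}\, \zz^\alpha.
\]
Then $Q$ is homogeneous of degree $d$ with the same non-negative Puiseux coefficients as $P$, so $\supp(Q) \subseteq \{\gamma \in \NN^{n+1} : |\gamma| = d\}$ is of constant sum and, in particular, of constant parity. The bijection $\alpha \mapsto (d - |\alpha|, \alpha)$ identifies $\supp(P)$ with $\supp(Q)$ and yields $\trop(P)(\alpha) = \trop(Q)(d - |\alpha|, \alpha)$.

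The technical core is to establish that $Q$ is $H_0$-stable. Given $(z_0, z_1, \ldots, z_n) \in H_0^{n+1}$, write $z_0 = re^{i\phi}$ with $r > 0$ and $\phi \in (0, \pi)$; setting $w_j := z_j/z_0$, a direct computation gives $\Arg(w_j) = \Arg(z_j) - \phi \in (-\phi, \pi - \phi)$, so $w_j$ lies in the rotated half-plane $H_\phi$ for each $j$. Since $z_0 \neq 0$, the $H_0$-stability of $Q$ is therefore equivalent to $P$ being $H_\phi$-stable for every $\phi \in (0, \pi)$. This rotational robustness is a standard feature of $H_0$-stable real polynomials with non-negative coefficients: via Tarski's principle I reduce to the case of coefficients in $\RR$, where it can be established by tracking the zero locus of $P$ under the continuous rotation of the half-plane, using the conjugation symmetry $V(P) = \overline{V(P)}$ together with the non-negativity of coefficients to rule out escape of a zero through the boundary or to infinity.

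Once $Q$ is known to be $H_0$-stable and $\supp(Q)$ is of constant parity, Theorem \ref{Main1} yields that $\trop(Q)$ is $M$-concave on $\supp(Q)$. The constant-sum structure of $\supp(Q)$ identifies it with the set of integer points of an integral base polyhedron \cite{MurotaBook}, and the coordinate projection $\pi_0 : \NN^{n+1} \to \NN^n$ forgetting the first coordinate maps $\supp(Q)$ bijectively onto $\supp(P)$. The characterization in \cite{MurotaBook} of integer points of integral generalized polymatroids as projections of integer points of integral base polyhedra then implies that $\supp(P)$ is itself the integer point set of an integral generalized polymatroid. Since the identification $\trop(P)(\alpha) = \trop(Q)(d - |\alpha|, \alpha)$ realizes $\trop(P)$ as the projection of the $M$-concave function $\trop(Q)$ along the first coordinate, the standard projection correspondence in discrete convex analysis \cite{MurotaBook} between $M$-concave functions on constant-sum jump systems and $M^\natural$-concave functions on generalized polymatroids delivers the $M^\natural$-concavity of $\trop(P)$, completing the argument.

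The principal obstacle is the stability step for $Q$: the rotational robustness claim crucially uses the non-negativity of the coefficients of $P$, and indeed for $H_0$-stable polynomials with arbitrary complex coefficients the homogenization need not remain $H_0$-stable, so the hypothesis $a_\alpha(t) \geq 0$ in Theorem \ref{Main2} cannot be dropped for this approach to succeed.
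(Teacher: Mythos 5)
Your proof is correct and follows essentially the same route as the paper: homogenize $P$, note the homogenization remains $H_0$-stable because the coefficients are nonnegative (the paper's Proposition \ref{hom2}, cited from \cite{BBL,MurotaBook}, which is exactly your ``rotational robustness'' claim), apply Theorem \ref{Main1} to the constant-parity homogenization, and descend via the standard projection correspondence between $M$-concave and $M^\natural$-concave functions (the paper's Proposition \ref{hom1}). The only difference is that you sketch arguments for these two auxiliary facts where the paper simply cites them.
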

See Section \ref{Examples} for concrete examples of $M$- and $M^\natural$-concave functions arising from Theorems \ref{Main1} and \ref{Main2}.

We end this section by discussing the case $n=1$ of Theorem \ref{Main2} and postpone the proofs of Theorems \ref{Main1} and \ref{Main2} to the next section.  Let 
$P(z)=\sum_{k=0}^n a_k z^k \in \RR[z]$. Then $P$ is $H_0$-stable if and only if all zeros of $P$ are real. Newton's inequalities then say that 
\begin{equation}\label{newton}
\frac {a_k^2}{{\binom n k}^2}\geq \frac {a_{k-1}}{{\binom n {k-1}}}\frac {a_{k+1}}{{\binom n {k+1}}}, \quad  \mbox{ for all } 1 \leq k \leq n-1.
\end{equation}
Hence if $P(z)=\sum_{k=0}^n a_k(t) z^k \in \PR[z]$ is $H_0$-stable and has nonnegative coefficients then \eqref{newton} holds by Tarski's principle, and consequently 
\begin{equation}\label{valnewton}
2\nu\left(a_k(t)\right) \geq \nu \left(a_{k-1}(t)\right)+ \nu\left(a_{k+1}(t)\right), \quad  \mbox{ for all } 1 \leq k \leq n-1.
\end{equation}
Since $P$ has nonnegative coefficients $\supp(P)$ forms an interval and then \eqref{valnewton} is seen to be equivalent to $M^\natural$-concavity of $f(k) = \nu\left(a_k(t)\right)$. There is also a partial converse 
to Newton's inequalities due to Hardy \cite{Ha} and Hutchinson \cite{Hu}. Let $[M,N]= \{M, M+1,\ldots, N\}$.
\begin{theorem}[Hutchinson, \cite{Hu}]\label{Hthm}
Suppose $P(z)=\sum_{k=M}^N a_k z^k \in \RR[z]$ where $a_k >0$ for all $k \in [M,N]$. 
If 
\begin{equation}\label{lc4}
a_k^2 \geq 4a_{k-1}a_{k+1}, \quad  \mbox{ for all } M<k<N,
\end{equation}
then all zeros of $P$ are real. 
Moreover if \eqref{lc4} holds with strict inequalities then $P$ has no multiple zeros except possibly $z=0$.
\end{theorem}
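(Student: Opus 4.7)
First, factor $P(z) = z^M \sum_{k=0}^{N-M} a_{k+M} z^k$: the factor $z^M$ supplies precisely the possibly multiple zero at the origin that the conclusion tolerates, so I may assume $M=0$. Set $r_k := a_{k-1}/a_k > 0$ for $1 \le k \le N$; the hypothesis \eqref{lc4} then translates to $r_{k+1} \ge 4 r_k$ for $1 \le k \le N-1$, so that $r_1 < r_2 < \cdots < r_N$ with each successive ratio at least $4$.

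The heart of the proof is a sign analysis at the $N$ points $-2r_1,\ldots,-2r_N$. Writing $T_j := a_j (2r_k)^j \ge 0$, one has $T_{k-1}/T_k = 1/2$ and $T_{k+1}/T_k = 2r_k/r_{k+1} \le 1/2$, and more generally
\[
\frac{T_{k-j}}{T_{k-j+1}} = \frac{r_{k-j+1}}{2r_k} \le \frac{1}{2\cdot 4^{j-1}}, \qquad \frac{T_{k+j}}{T_{k+j-1}} = \frac{2r_k}{r_{k+j}} \le \frac{1}{2\cdot 4^{j-1}},
\]
so both sequences $(T_{k-j})_{j\ge 0}$ and $(T_{k+j})_{j\ge 0}$ decrease strictly from the peak $T_k$. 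Grouping the terms of $P(-2r_k) = \sum_j (-1)^j T_j$ around $T_k$,
\[
(-1)^k P(-2r_k) = T_k - (T_{k-1} - T_{k-2} + T_{k-3} - \cdots) - (T_{k+1} - T_{k+2} + T_{k+3} - \cdots);
\]
each parenthesised alternating series of positive decreasing terms lies between $0$ and its first summand, so
\[
(-1)^k P(-2r_k) \ge T_k - T_{k-1} - T_{k+1} \ge T_k\bigl(1 - \tfrac12 - \tfrac12\bigr) = 0,
\]
with strict inequality whenever $r_{k+1}>4r_k$ (the endpoint $k=N$ is always strict since $T_{N+1}=0$). The same kind of dominance at $|z| = 2r_N$ yields $|a_N z^N| > \sum_{j<N}|a_j z^j|$, so every zero of $P$ lies in $\{|z|<2r_N\}$.

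Since $P(0)=a_0>0$, the values $P(0), P(-2r_1), \dots, P(-2r_N)$ have weakly alternating signs. When \eqref{lc4} is strict the alternation is strict, and the intermediate value theorem places a zero in each of the $N$ disjoint open intervals $(-2r_k, -2r_{k-1})$ for $k=1,\dots,N$ (with $r_0:=0$); since $\deg P = N$ and no zero lies outside $\{|z|<2r_N\}$, these are all the zeros of $P$ and they are distinct, hence simple. The non-strict case follows by approximation: one perturbs the coefficients to satisfy strict \eqref{lc4} and invokes continuity of roots to conclude that $P$ is a limit of real-rooted polynomials, hence real-rooted. The main technical point I anticipate wrestling with is the alternating-tail estimate in the middle paragraph; the constant $4$ in the hypothesis is exactly what forces the two immediate neighbours $T_{k\pm 1}$ to each contribute at most $T_k/2$, while the further geometric decay of remaining terms ensures that the alternating tails are indeed bounded by their first summands.
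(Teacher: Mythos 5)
The paper does not prove this theorem; it is quoted from Hutchinson \cite{Hu} (with Hardy's weaker constant $9$ also mentioned), so there is no in-paper argument to compare against. Your proof is correct and is essentially the classical argument for results of this type: test the sign of $P$ at the points $-2a_{k-1}/a_k$ and show that the $k$-th term dominates the two alternating tails. The key computations all check out: $T_{k-1}/T_k=1/2$ holds with \emph{equality} (which is why the bound $(-1)^kP(-2r_k)\geq T_k-T_{k-1}-T_{k+1}\geq 0$ is only weak in general), $T_{k+1}/T_k\leq 1/2$ with equality iff $a_k^2=4a_{k-1}a_{k+1}$, and the geometric decay of the remaining ratios justifies bounding each alternating tail by its first summand. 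The strict case then gives $N-M$ sign changes, hence $N-M$ distinct roots in the disjoint intervals $(-2r_k,-2r_{k-1})$, which together with the factor $z^M$ accounts for the full degree; the disk bound $|z|<2r_N$ is not even needed at that point. Two small things are worth writing out. First, in the weak case you should exhibit the perturbation rather than assert it: e.g.\ $a_k\mapsto a_k(1+\varepsilon)^{-k^2}$ multiplies each quotient $a_k^2/(a_{k-1}a_{k+1})$ by $(1+\varepsilon)^2>1$, so it produces strict inequalities and converges coefficientwise as $\varepsilon\to 0^+$, after which Hurwitz's theorem (the degree and leading coefficient are preserved) gives real-rootedness of the limit. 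Second, note that the ``moreover'' clause is proved only in the strict case, which is exactly what the statement requires, so the approximation step loses nothing.
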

Hardy \cite{Ha} proved Theorem \ref{Hthm} with the 4 replaced by a 9. 
\begin{remark}\label{onevar}
It follows from Theorem \ref{Hthm}
and Tarski's principle that if $f : [M,N] \rightarrow \RR$ is $M^\natural$-concave,  then the polynomial 
$$
P(z)= \sum_{k=M}^N 4^{-\binom k 2} t^{f(k)}z^k
$$
is $H_0$-stable over $\PR$. Also, if $P(z)= \sum_{k=M}^Na_k(t) z^k  \in \PR[z]$ where $a_k(t) >0$ for all $k \in [M,N]$ and    
$$
2\nu\left(a_k(t)\right) > \nu \left(a_{k-1}(t)\right)+ \nu\left(a_{k+1}(t)\right), \quad \mbox{ for all } k \in [M+1,N-1], 
$$
then $P(z)$ is $H_0$-stable. 
\end{remark}
\section{Proofs of Theorem \ref{Main1} and Theorem \ref{Main2}}
We start by discussing polarization procedures for jump systems, $M$-concave functions and HPP-polynomials. 
  
If $\A \subset \NN^n$ is a finite set and $j\in [1,n]$ let $\kappa_j= \max\{ \alpha_j : (\alpha_1,\ldots, \alpha_n) \in \A\}$ and  $V_\kappa = \{ v_{ij} : 1\leq i \leq n \mbox{ and } 0 \leq j \leq \kappa_j\}$ where all the $v_{ij}$'s  are distinct. Define a projection  $\Pi_\kappa^\downarrow : \{0,1\}^{V_\kappa} \rightarrow \NN^n$  by 
$$
\Pi_\kappa^\downarrow(\sigma)= \left(\sum_{j=0}^{\kappa_1}\sigma(v_{1j}), \ldots, \sum_{j=0}^{\kappa_n}\sigma(v_{nj})\right).
$$

The {\em polarization} of $\A$ is 
$
\PP(\A)= \{ \sigma \in \{0,1\}^{V_\kappa}: \Pi_\kappa^\downarrow(\sigma) \in \A \}. 
$
Similarly if $f: \A \rightarrow \RR$ define the {\em polarization}, $f^{\uparrow} : \PP(\A) \rightarrow \RR$, of $f$ by $f^{\uparrow}(\sigma)=f\left(\Pi_\kappa^\downarrow(\sigma)\right)$. 
\begin{proposition}\label{polsystem}
Let $\A \subset \NN^n$ be a finite set and $f : \A \rightarrow \RR$. Then 
\begin{itemize} 
\item[(1)] $\A$ is a jump system if and only if $\PP(\A)$ is a $\Delta$-matroid;
\item[(2)] If $\A$ has constant parity then $f$ is $M$-concave if and only if $f^\uparrow$ is $M$-concave. 
\end{itemize}
\end{proposition}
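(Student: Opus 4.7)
The plan is to exploit the tight compatibility between the projection $\Pi_\kappa^\downarrow$ and the step structure: a slot-step $s' = \pm e_{v_{i,j}}$ in $\{0,1\}^{V_\kappa}$ always satisfies $\Pi_\kappa^\downarrow(\sigma + s') - \Pi_\kappa^\downarrow(\sigma) = \pm e_i$, but whether this projects to a genuine step from $\alpha = \Pi_\kappa^\downarrow(\sigma)$ to $\beta = \Pi_\kappa^\downarrow(\tau)$ depends on how the $i$-block $\{v_{i,0},\ldots,v_{i,\kappa_i}\}$ is arranged inside $\sigma$ and $\tau$. The argument in both parts will hinge on a dichotomy: either the $i$-block contains a \emph{balancing slot} allowing the return step $t'$ to stay within that block, or else the $i$-blocks of $\sigma$ and $\tau$ are nested and the projected step is a legitimate coordinate-step in $\NN^n$.

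For part (1), forward direction, suppose $\A$ is a jump system and take $\sigma,\tau \in \PP(\A)$ with $\sigma \stackrel{s'}{\rightarrow} \tau$, $\sigma + s' \notin \PP(\A)$; say $s' = +e_{v_{i,j}}$ (the other sign is symmetric). If some other index $j'$ in the $i$-block satisfies $\sigma(v_{i,j'}) = 1$ and $\tau(v_{i,j'}) = 0$, then $t' = -e_{v_{i,j'}}$ is a step from $\sigma+s'$ toward $\tau$ with $\Pi_\kappa^\downarrow(\sigma + s' + t') = \alpha \in \A$. Otherwise the $i$-block of $\sigma$ is pointwise dominated by that of $\tau$, so $\alpha_i < \beta_i$; here $+e_i$ is a step from $\alpha$ to $\beta$ with $\alpha+e_i \notin \A$, and (J1) for $\A$ produces a step $t=\pm e_k$ which I lift to a slot-step $t'$ by picking a slot $v_{k,l}$ where the values of $\sigma+s'$ and $\tau$ differ in the sign prescribed by $t$ (such a slot exists because the step condition on $t$ gives the corresponding block-sum inequality). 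For the backward direction, given $\alpha,\beta \in \A$ with $\alpha \stackrel s \rightarrow \beta$ and $\alpha + s \notin \A$, I first choose \emph{aligned} lifts $\sigma,\tau \in \PP(\A)$: within each $i$-block the $1$s of $\sigma$ and $\tau$ are nested in the direction of $\alpha_i$ vs $\beta_i$. Such lifts lie in $\PP(\A)$ by definition and satisfy $|\sigma - \tau| = |\alpha - \beta|$; under this alignment every slot-step from $\sigma$ toward $\tau$ projects to a coordinate-step from $\alpha$ toward $\beta$, so lifting $s$ to some $s'$ puts us in the hypothesis of the $\Delta$-matroid axiom for $\PP(\A)$, and projecting the resulting $t'$ yields the step $t$ witnessing (J1) for $\A$.

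For part (2), the same bifurcation applies to $f^\uparrow = f \circ \Pi_\kappa^\downarrow$. In the balancing-slot case the steps cancel inside a single $i$-block, giving $\Pi_\kappa^\downarrow(\sigma + s' + t') = \alpha$ and $\Pi_\kappa^\downarrow(\tau - s' - t') = \beta$, so the required inequality $f^\uparrow(\sigma) + f^\uparrow(\tau) \leq f^\uparrow(\sigma + s' + t') + f^\uparrow(\tau - s' - t')$ holds with equality. In the non-balancing case one invokes axiom (M) on the appropriate side ($\A$ for the forward implication, $\PP(\A)$ for the backward one, where part (1) has already provided the underlying jump system structure) and transfers the exchange using the same lift/project mechanism as in part (1). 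Constant parity enters precisely here, since axiom (M) is phrased in the (J2)-style symmetric form that is equivalent to (J1) only under constant parity.

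The main obstacle is the non-balancing case in part (2): one must choose a single $t'$ so that \emph{both} $\sigma + s' + t'$ and $\tau - s' - t'$ project correctly and lie in $\PP(\A)$. This amounts to finding a slot $v_{k,l}$ on which $\sigma + s'$ and $\tau - s'$ take complementary prescribed values; the aligned-lift construction guarantees its existence outside the $i$-block, and inside the $i$-block the nestedness forced by the non-balancing hypothesis plays the analogous role. Once this bookkeeping is in place, the inequality on $f$ from (M) transports directly to the inequality on $f^\uparrow$, completing the proof.
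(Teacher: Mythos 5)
Your argument is correct, and it supplies precisely the routine verification that the paper waves at: the paper's own ``proof'' consists of the sentence that the claim is almost immediate from the definitions, with part (1) delegated to the reference [KS]. Your balancing-slot dichotomy together with the aligned-lift construction is the standard way to make ``immediate from the definitions'' precise here, and the key points are all in place: a slot-step projects to a coordinate step exactly when it is not cancelled inside its own block; in the cancelled case the images of $\sigma+s'+t'$ and $\tau-s'-t'$ are $\alpha$ and $\beta$ themselves so (M) holds with equality; and in the uncancelled case the strict block-sum inequalities coming from the step conditions guarantee the existence of the lifting slots for $t$, so the exchange and the inequality transport verbatim between $f$ and $f^\uparrow$.
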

\begin{proof}
This is almost immediate from the definitions. For a proof of (1) see \cite{KS}. 
\end{proof}
 Let $P \in \PC[z_1,\ldots, z_n]$ be a polynomial of degree $d_i$ in the variable $z_i$ for all $1\leq i \leq n$. The 
{\em polarization}, $\PP(P)$, is the unique polynomial in the variables 
$\{ z_{ij} : 1 \leq i \leq n \mbox{ and } 1 \leq j \leq d_i\}$ satisfying 
\begin{enumerate}
\item $\PP(P)$ is multiaffine; 
\item $\PP(P)$ is symmetric in the variables $z_{i1}, \ldots, z_{id_i}$ for all $1 \leq i \leq n$;
\item If we make the change of variables   $z_{ij}=z_i$ for all $i,j$ in $\PP(P)$ we recover $P$.
\end{enumerate}
Note that $\supp(\PP(P))=\PP(\supp(P))$.
\begin{proposition}\label{polarizeit}
Let $P \in \PC[z_1,\ldots, z_n]$ and let $H$ be a half-plane in $\CC$. Then 
$P$ is $H$-stable if and only if $\PP(P)$ is $H$-stable. 
\end{proposition}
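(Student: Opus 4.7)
The plan is to prove the two implications separately, with the forward direction (polarization-stable implies stable) being immediate from the definition of $\PP(P)$, and the reverse direction relying on the Grace--Walsh--Szeg\H{o} coincidence theorem adapted to the Puiseux series setting.

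First I would handle the easy direction. Suppose $\PP(P)$ is $H$-stable and let $z_1, \ldots, z_n \in H$. Making the substitution $z_{ij} = z_i$ for all $i,j$, each $z_{ij}$ lies in $H$, so $\PP(P)$ evaluated at this point is nonzero; by property (3) of the polarization this value equals $P(z_1,\ldots,z_n)$, so $P(z_1,\ldots,z_n) \neq 0$ and $P$ is $H$-stable.

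For the reverse direction, assume $P$ is $H$-stable and take any $(z_{ij}) \in H^{V_\kappa}$. The key tool is the Grace--Walsh--Szeg\H{o} coincidence theorem: if $f$ is a multiaffine polynomial symmetric in $w_1, \ldots, w_d$ and $C$ is any circular region (a half-plane in our case), then for any $\zeta_1, \ldots, \zeta_d \in C$ there exists $\zeta \in C$ such that $f(\zeta_1, \ldots, \zeta_d) = f(\zeta, \ldots, \zeta)$. Since $\PP(P)$ is symmetric in each block $z_{i1}, \ldots, z_{id_i}$, I would apply this theorem iteratively, one block at a time, to produce values $\zeta_1, \ldots, \zeta_n \in H$ satisfying
$$
\PP(P)(z_{ij}) \;=\; \PP(P)(\zeta_1,\ldots,\zeta_1,\zeta_2,\ldots,\zeta_2,\ldots,\zeta_n,\ldots,\zeta_n) \;=\; P(\zeta_1,\ldots,\zeta_n),
$$
where the last equality uses property (3) of the polarization. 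Since $P$ is $H$-stable and each $\zeta_i \in H$, the right-hand side is nonzero, completing the proof.

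The main obstacle is that Grace--Walsh--Szeg\H{o} is classically stated over $\CC$, while we work over $\PC$. To bridge this gap I would invoke Tarski's principle: the statement ``for every multiaffine polynomial of specified multidegree that is symmetric in a given block of variables, and for every choice of evaluation point in $H_\theta^d$, there exists a coincidence value in $H_\theta$'' can be encoded as a first-order sentence in the language of ordered fields, using $\PC = \PR[i]$ and the fact that each half-plane $H_\theta$ is cut out by a linear inequality in the real and imaginary parts over $\PR$. Since this sentence holds over $\RR$ by the classical theorem, Tarski's principle yields the analogous statement over $\PR$, and hence the Grace--Walsh--Szeg\H{o} theorem for half-planes in $\PC$ as required.
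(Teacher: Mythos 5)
Your proof is correct and follows essentially the same route as the paper: the paper simply cites the known complex-coefficient version of the statement (whose proof in \cite{COSW} and \cite{BB} is exactly the Grace--Walsh--Sz\-eg\H{o} argument you spell out) and then transfers it to $\PC$ by identifying $\PC$ with $\PR\times\PR$ and invoking Tarski's principle. The only cosmetic difference is that you apply Tarski to the Grace--Walsh--Sz\-eg\H{o} coincidence statement itself rather than to the finished proposition, which changes nothing of substance.
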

\begin{proof}
For the corresponding statement over $\CC$, see \cite{COSW} or \cite[Proposition 2.4]{BB}. This can be translated to a statement concerning $\RR$ (or $\PR$) by identifying $\CC$ with $\RR \times \RR$ (or $\PC$ with $\PR \times \PR$), and considering $P$ to be a function from $\RR^n\times \RR^n$ to $\RR \times \RR$.  Hence, the theorem also holds for $\PC$ by Tarski's Principle. 
\end{proof}
Murota \cite{Murota06} proved that if $\JJ$ is a constant parity jump system, then a function $f : \JJ \rightarrow \RR$ is $M$-{concave} if and only if it respects the following local axiom. 
\begin{itemize}
\item[(M${}_{\tiny \text{loc}}$):] If 
$\alpha,\beta \in \JJ$ and $|\alpha - \beta|=4$, then there are steps $s,t$ such that $\alpha \stackrel s {\rightarrow} \beta$, $\alpha+s \stackrel t {\rightarrow} \beta$, $\alpha+s+t \in \JJ$, $\beta-s-t \in \JJ$ and 
$$
f(\alpha)+f(\beta) \leq f(\alpha+s+t)+f(\beta-s-t).
$$ 
\end{itemize}

Real multiaffine polynomials with the half-plane property with respect to the upper half-plane are characterized by inequalities (compare with \ML). Proposition \ref{SR} was originally formulated for $\RR$ but holds also for $\PR$ by Tarski's Principle.  
\begin{proposition}[Br\"and\'en, \cite{Br}]\label{SR}
Let $P \in \PR[z_1,\ldots, z_n]$ be multiaffine and let $H_0$ be the open upper half-plane. 
Then $P$ is $H_0$-stable if and only if 
$$
\frac{\partial P}{\partial z_i}(\xx) \frac{\partial P}{\partial z_j}(\xx) - \frac{\partial^2P}{\partial z_i\partial z_j}(\xx) P(\xx) \geq 0
$$
for all $i,j \in \{1,\ldots, n\}$ and $\xx \in \PR^n$. 
\end{proposition}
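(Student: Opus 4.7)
My plan is to reduce the proposition to its analogue over $\RR$, which was established in \cite{Br}, by invoking Tarski's Principle. The approach rests on observing that, for a fixed multiaffine shape, the biconditional between $H_0$-stability of $P$ and the pointwise Rayleigh-type inequality can be encoded as an elementary sentence in the language of real closed fields, parametrized by the (at most $2^n$) coefficients of $P$. Since both sides of the equivalence are elementary statements about these coefficients, the equivalence will transfer to any real closed field, and in particular to $\PR$.

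To perform the encoding I would write each complex variable as $z_k = x_k + i y_k$ with $x_k, y_k \in \PR$ and split $P$ into its real and imaginary parts $P(\zz) = U(\xx, \yy) + i V(\xx, \yy)$, where $U$ and $V$ are polynomials in $2n$ real variables whose coefficients are explicit $\ZZ$-linear combinations of the coefficients of $P$. Then $H_0$-stability of $P$ is equivalent to the universally quantified first-order statement
\[
\forall x_1, y_1, \ldots, x_n, y_n \in \PR : \; (y_1 > 0) \wedge \cdots \wedge (y_n > 0) \; \Longrightarrow \; U^2 + V^2 > 0.
\]
The right-hand side of the biconditional in Proposition \ref{SR} is already manifestly elementary, being a universally quantified polynomial inequality in $\xx$ and the coefficients. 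Since the full biconditional holds over $\RR$ by \cite{Br}, Tarski's Principle will then yield the same equivalence over $\PR$.

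The main obstacle, modulo citing the real case, is really just verifying that no hidden non-elementary quantification slips in. Here I would use that $P$ is multiaffine, so its coefficients form a finite tuple of elements of $\PR$ over which first-order statements can be formulated, and that $H_0$-stability reduces to nonvanishing of $U^2 + V^2$ on a real semialgebraic set cut out by the inequalities $y_k > 0$. For orientation one may note that the underlying real case itself reduces to the bivariate situation: fixing all other variables at real values and writing $P = A + B z_i + C z_j + D z_i z_j$, a short computation gives $\partial_i P(\xx) \partial_j P(\xx) - P(\xx) \partial_{ij} P(\xx) = BC - AD$, and a M\"obius-transformation analysis of $(u,v) \mapsto A + Bu + Cv + Duv$ yields the equivalence $BC - AD \geq 0 \Leftrightarrow H_0$-stability in the two active variables. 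Since this gives only an elementary real-valued statement, no complexifications or analytic notions beyond first-order arithmetic over $\PR$ are required, and Tarski's Principle applies without caveat.
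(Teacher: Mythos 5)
Your proposal is correct and matches the paper's approach: the paper likewise justifies Proposition \ref{SR} by citing the real case from \cite{Br} and transferring to $\PR$ via Tarski's Principle, after identifying $\PC$ with $\PR\times\PR$ so that $H_0$-stability becomes an elementary first-order statement in the coefficients. Your explicit encoding via $U^2+V^2>0$ and the remark on the bivariate computation are consistent with (and slightly more detailed than) what the paper records.
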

We now have all tools to proceed with the proof of Theorem \ref{Main1}.
\begin{proof}[Proof of Theorem \ref{Main1}]
Let $P= \sum_{\gamma \in \NN^n}a_\gamma(t)\zz^\gamma \in \PC[z_1,\ldots, z_n]$ and suppose that $P$ has the half-plane property and that $\JJ=\supp(P)$ has constant parity.  $\JJ$ is a constant parity jump system by Theorem \ref{jumpy} and Tarski's Principle. By Propositions \ref{polsystem} and \ref{polarizeit} we may assume that 
$P$ is multiaffine and that $\JJ$ is a $\Delta$-matroid.  To prove the validity of \ML, assume that  $\alpha, \beta \in \JJ$ with $|\alpha-\beta|=4$.  By a rotation of the variables ($P(e^{i\theta}z_1,\ldots, e^{i\theta}z_n)$ for some $\theta \in \RR$) we may assume that $P$ has the half-plane property with respect to the right half-plane. But then, by \cite[Theorem 6.2]{COSW}, we may assume that all nonzero coefficients are positive. Since $\Re(z) >0$ if and only if  $\Re(z^{-1})>0$ the operation 
$$
P(z_1,\ldots, z_j, \ldots, z_n) \mapsto z_jP(z_1, \ldots, z_j^{-1}, \ldots, z_n)
$$
preserves the half-plane property with respect to the right half-plane (and the constant parity property). By performing such operations for 
the indices satisfying $\alpha_j > \beta_j$ we may in fact assume that $\alpha_j \leq \beta_j$ for all $j$. 
Suppose that $\alpha_i =1$ and $\beta_j=0$. By Hurwitz' theorem\footnote{See \cite{COSW} for an appropriate multivariate version.} the polynomials
$$
\lim_{\lambda \rightarrow \infty} \lambda^{-1}P(z_1, \ldots, z_{i-1}, \lambda, z_{i+1}, \ldots, z_n), \quad \lim_{\lambda \rightarrow \infty}  P(z_1, \ldots, z_{j-1}, \lambda^{-1}, z_{j+1}, \ldots, z_n)
$$
are right half-plane stable. If necessary, by performing a few such operation we end up with (by reindexing the variables and indices) a right half-plane stable polynomial 
$Q(z_1,z_2,z_3,z_4)$, and the vectors we want to check the validity of \ML{ } are  $\alpha = (0,0,0,0), \beta=(1,1,1,1) \in \supp(Q)$. Since all coefficients of $Q$ are nonnegative  the polynomial 
\begin{equation*}
\begin{split}
G(z_1,z_2, z_3, z_4)&= Q(-iz_1,-iz_2,-iz_3,-iz_4) \\ &= 
b_{0000}(t)- b_{0011}(t)z_3z_4-b_{0101}(t)z_2z_4-b_{0110}(t)z_2z_3 \\ &- b_{1001}(t)z_1z_4-b_{1010}(t)z_1z_3-b_{1100}(t)z_1z_2 + b_{1111}(t)z_1z_2z_3z_4
\end{split}
\end{equation*}
is upper half-plane stable with real coefficients. So is the polynomial 
\begin{equation*}
\begin{split}
F(z_1,z_2, z_3, z_4)&= G(z_1,z_2,t^{\lambda}z_3,z_4) \\ &= 
a_{0000}(t)- a_{0011}(t)z_3z_4-a_{0101}(t)z_2z_4-a_{0110}(t)z_2z_3 \\ &- a_{1001}(t)z_1z_4-a_{1010}(t)z_1z_3-a_{1100}(t)z_1z_2 + a_{1111}(t)z_1z_2z_3z_4, 
\end{split}
\end{equation*}
where $\lambda$ is any real number. 

We are now in a position to apply Proposition \ref{SR}. 
\begin{equation*}\begin{split}
&\frac{\partial F}{\partial z_1}(0,0,1,x) \frac{\partial F}{\partial z_2}(0,0,1,x) - \frac{\partial^2F}{\partial z_1\partial z_2}(0,0,1,x) F(0,0,1,x) =\\ 
&x^2 (a_{1001}a_{0101}+a_{1111}a_{0011})+x(a_{1001}a_{0110}+a_{1010}a_{0101}-a_{0000}a_{1111}-a_{1100}a_{0011}) \\ &+ a_{1010}a_{0110}+a_{0000}a_{1100} =x^2A+xB+C\geq 0.
\end{split}
\end{equation*}
Hence the discriminant, $\Delta=B^2-4AC$, of the above quadratic in $x$ is nonpositive (by Theorem \ref{jumpy} and Tarski's principle). In order to get a contradiction assume 
$$
\nu(b_{0000})+\nu(b_{1111}) > \max\Big( \nu(b_{1001})+\nu(b_{0110}),  \nu(b_{1010})+\nu(b_{0101}), \nu(b_{1100})+\nu(b_{0011})\Big).
$$
Then 
\begin{equation}\label{ina}
\begin{split}
&\nu(a_{0000})+\nu(a_{1111}) >  \\
&\max\Big( \nu(a_{1001})+\nu(a_{0110}),  \nu(a_{1010})+\nu(a_{0101}), \nu(a_{1100})+\nu(a_{0011})\Big)
\end{split}
\end{equation}
for all $\lambda \in \RR$. We shall see that for some $\lambda$ the discriminant $\Delta$ will be positive. By \eqref{ina}, $\nu(B^2)=2\nu(a_{0000})+2\nu(a_{1111})=:W(\lambda)$. Note that $W(\lambda)=W(0)+2\lambda$. Also,  by \eqref{ina}, 
$W(\lambda)$ is greater than the valuation of each term in the expansion of $4AC$ except possibly for  
$$
U(\lambda):=\nu(a_{1010})+\nu(a_{0110})+\nu(a_{1111})+\nu(a_{0011})=U(0)+4\lambda
$$
and 
$$
V(\lambda):=\nu(a_{0000})+\nu(a_{1100})+\nu(a_{1001})+\nu(a_{0101})=V(0). 
$$
Hence it remains to prove that for some $\lambda_0$ 
\begin{equation}\label{uvw}
\max( U(\lambda_0), V(\lambda_0) ) < W(\lambda_0), 
\end{equation}
because then, for $\lambda_0$,  $\nu(B^2)>\nu(4AC)$ and thus $\Delta>0$. 
Suppose that $U(0) \neq -\infty$ and $V(0) \neq -\infty$. 
For $\lambda$ small enough we have $U(\lambda)< W(\lambda)< V(\lambda)$ and for $\lambda$ large enough we have $V(\lambda)< W(\lambda)< U(\lambda)$.  It follows that there is a number $\lambda_0$ for which $U(\lambda_0)=V(\lambda_0)$. However, \eqref{ina} implies 
$
U(\lambda)+V(\lambda) < 2W(\lambda)
$
 for all $\lambda$, so $U(\lambda_0)=V(\lambda_0)< W(\lambda_0)$.  The case when $U(0) = -\infty$ or $V(0) = -\infty$ follows similarly. 
\end{proof}
If $\JJ \subset \NN^n$ is a finite set with $\max\{|\alpha| : \alpha \in \JJ\}=r$ and $f: \JJ \rightarrow \RR$ let 
$$\tilde{\JJ} = \left\{ (\alpha_1,\ldots, \alpha_{n+1}) \in \NN^{n+1}: (\alpha_1,\ldots, \alpha_{n})\in \JJ, \alpha_{n+1}=r-\sum_{j=1}^n\alpha_j\right\}, 
$$
and let $\tilde{f} : \tilde{\JJ} \rightarrow \RR$ be defined by $\tilde{f}(\alpha_1,\ldots, \alpha_{n+1})=f(\alpha_1,\ldots, \alpha_{n})$. 
Proofs of the next two propositions can be found in \cite{MurotaBook} and \cite{BBL}. 
\begin{proposition}\label{hom1}
Let $\JJ \subset \NN^n$ be a finite set and $f : \JJ \rightarrow \RR$. Then $f$ is 
$M^\natural$-concave if and only if $\tilde{f}$ is $M$-concave.
\end{proposition}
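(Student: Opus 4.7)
The plan is to exploit that $\tilde\JJ$ has constant coordinate sum $r$, so it is automatically of constant parity; Murota's local axiom \ML{} then reduces $M$-concavity of $\tilde f$ to verifying the exchange inequality on pairs $\tilde\alpha,\tilde\beta\in\tilde\JJ$ with $|\tilde\alpha-\tilde\beta|=4$. First I would set up a dictionary between moves available in $\ZZ^{n+1}$ and those in $\ZZ^n$. Since $\tilde\alpha-\tilde\beta=(\alpha-\beta,\,|\beta|-|\alpha|)$, every step $\tilde s\in\ZZ^{n+1}$ from $\tilde\alpha$ to $\tilde\beta$ is either a \emph{lifted step} $\tilde s=(s,0)$, with $s$ a step in $\ZZ^n$ from $\alpha$ to $\beta$, or a \emph{compensation step} $\tilde s=\operatorname{sgn}(|\alpha|-|\beta|)\,e_{n+1}$ (available only when $|\alpha|\neq|\beta|$). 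The constant-sum constraint forces any follow-up step $\tilde t$ witnessing (M) to have coordinate sum opposite to that of $\tilde s$.

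Next I would translate the two branches of \MN{} through this dictionary. The ``short'' branch (i), which invokes only the step $s$, matches the scenario in which exactly one of $\tilde s,\tilde t$ is a compensation step: then $\tilde\alpha+\tilde s+\tilde t=\widetilde{\alpha+s}$ and $\tilde\beta-\tilde s-\tilde t=\widetilde{\beta-s}$, membership in $\tilde\JJ$ reduces to membership in $\JJ$, and the $M$-concavity inequality for $\tilde f$ becomes precisely $f(\alpha)+f(\beta)\le f(\alpha+s)+f(\beta-s)$. The ``long'' branch (ii), which invokes a follow-up step $t$ in $\ZZ^n$, matches the scenario in which both $\tilde s$ and $\tilde t$ are lifted: then $\tilde\alpha+\tilde s+\tilde t=\widetilde{\alpha+s+t}$ and the inequality becomes $f(\alpha)+f(\beta)\le f(\alpha+s+t)+f(\beta-s-t)$.

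With this dictionary in hand both implications are routine case-analyses. In the forward direction, given $\tilde\alpha,\tilde\beta,\tilde s$, I would invoke \MN{} on a suitable step $s\in\ZZ^n$ (the projection of $\tilde s$ when $\tilde s$ is lifted; any step from $\alpha$ to $\beta$ otherwise) and convert the resulting data into the required $\tilde t$. In the reverse direction I would lift $\alpha,\beta,s$ to $\tilde\alpha,\tilde\beta,\tilde s=(s,0)$ and invoke $M$-concavity of $\tilde f$ to obtain $\tilde t$, whose type then dictates which branch of \MN{} holds. The hard part will be the compensation-step case of the forward direction: here $\tilde s=\pm e_{n+1}$ is not the image of any step in $\ZZ^n$, and producing a $\tilde t$ of lifted form requires that \MN{} furnishes its short branch (i) for some choice of auxiliary $s$. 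This is precisely where the generalized polymatroid structure on $\JJ$ enters essentially, guaranteeing that the one-step exchange of \MN{} case (i) is actually available. The remaining bookkeeping is routine and is detailed in \cite{MurotaBook,BBL}.
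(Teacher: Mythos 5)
The paper gives no argument for this proposition---it is stated with a pointer to \cite{MurotaBook} and \cite{BBL}---so there is no internal proof to match your sketch against. Your homogenization dictionary is the correct and standard way to organize the equivalence: lifted steps $(s,0)$ versus the compensation step $\pm e_{n+1}$, with branch (i) of \MN{} corresponding to ``exactly one of $\tilde s,\tilde t$ is a compensation step'' and branch (ii) to ``both lifted.'' The reverse direction ($\tilde f$ $M$-concave $\Rightarrow$ $f$ $M^\natural$-concave) does go through as you describe, since there one only feeds lifted steps $\tilde s=(s,0)$ into (M) and reads off the type of the returned $\tilde t$.

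The forward direction, however, contains a genuine gap, and it is larger than the single ``hard part'' you flag. The axiom \MN{} guarantees, for each individual step $s$, only that branch (i) \emph{or} branch (ii) holds, whereas verifying (M) (or \ML) for $\tilde f$ requires a specific branch with a specific sign pattern: when $\tilde s$ is the compensation step you need branch (i) to hold for \emph{some} coordinate $j$ with $\alpha_j>\beta_j$; when $\tilde s=(s,0)$ is lifted and $|\alpha|=|\beta|$ you need branch (ii) with a follow-up step $t$ of sign \emph{opposite} to $s$ (a same-sign $t$, which \MN{} happily permits, leaves the constant-sum slice and is useless); and when $\tilde s$ is lifted with $|\alpha|\neq|\beta|$ only the branch whose sign data is compatible with $|\alpha|-|\beta|$ can be used. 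None of this is delivered by a single application of \MN; extracting these stronger exchange properties from \MN{} is precisely the equivalence-of-exchange-axioms theorem in \cite{MurotaBook}, i.e.\ it is the content of the proposition rather than ``routine bookkeeping.'' Worse, with the axiom exactly as stated in the paper the extraction is impossible: take $n=1$, $\JJ=\{0,2\}$ and any $f$. For the unique step $s$ one may take $t=s$ in branch (ii), so that $\alpha+s+t=\beta$ and the inequality is a tautology; hence $f$ satisfies \MN, yet $\tilde{\JJ}=\{(0,2),(2,0)\}$ is not even a jump system and $\tilde f$ is not $M$-concave. A complete proof must therefore either start from the non-degenerate standard exchange axiom for $M^\natural$-concavity or rule out such degeneracies separately; and before invoking Murota's local criterion \ML{} for $\tilde f$ you must also first establish that $\tilde{\JJ}$ is a constant-parity jump system, which is the set-level instance of the same difficulty.
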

\begin{proposition}\label{hom2}
Suppose that $P \in \PR[z_1,\ldots, z_n]$ has degree $r$ and that all coefficients in $P$ are nonnegative. Let $\tilde{P}(z_1,\ldots, z_{n+1})=z_{n+1}^rP(z_1/z_r, \ldots, z_n/z_r)$. Then 
$P$ is $H_0$-stable if and only if $\tilde{P}$ is $H_0$-stable.
\end{proposition}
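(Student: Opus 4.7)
The ``if'' direction follows from a standard application of Hurwitz's theorem: for each $\epsilon>0$ and real $c>0$, $c+i\epsilon\in H_0$, so $\tilde P(z_1,\ldots,z_n,c+i\epsilon)\neq 0$ whenever $(z_1,\ldots,z_n)\in H_0^n$. Letting $\epsilon\to 0^+$, the multivariate Hurwitz theorem yields that $\tilde P(z_1,\ldots,z_n,c)=c^r P(z_1/c,\ldots,z_n/c)$ is either identically zero or nowhere zero on $H_0^n$; since $P\neq 0$ the latter holds, and positive rescaling then gives that $P$ is $H_0$-stable.

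For the ``only if'' direction I would first reduce to the multiaffine case by Propositions \ref{polsystem} and \ref{polarizeit}, writing $P(\zz)=\sum_S a_S\prod_{j\in S}z_j$ with $a_S\ge 0$ and the sum over $S\subseteq\{1,\ldots,n\}$. Polarizing only the $z_{n+1}$-variable of $\tilde P$ into new variables $w_1,\ldots,w_r$ produces the multiaffine polynomial symmetric in the $w_k$,
$$
\PP(\tilde P)(\zz,w_1,\ldots,w_r)=\sum_{S}\frac{a_S}{\binom{r}{|S|}}\,e_{r-|S|}(w_1,\ldots,w_r)\prod_{j\in S}z_j,
$$
whose coefficients are nonnegative, and by Proposition \ref{polarizeit} $\tilde P$ is $H_0$-stable iff $\PP(\tilde P)$ is.

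Applying Proposition \ref{SR} to $\PP(\tilde P)$ reduces its $H_0$-stability to the pairwise inequalities $(\partial_i F)(\partial_j F)\ge F\,\partial_i\partial_j F$ at every point of $\PR^{n+r}$, with $F=\PP(\tilde P)$, as $i,j$ range over the $n+r$ variables. These split by index type: the $zz$-cases specialize (via the substitution $w_1=\cdots=w_r=1$, which recovers $P$) to the inequalities that Proposition \ref{SR} gives for $P$ itself, and so hold by the assumed $H_0$-stability of $P$; the $ww$-cases reduce, by the symmetry in the $w_k$, to Newton-type inequalities among the elementary symmetric polynomials $e_k(w)$, combined with nonnegativity of the $a_S$. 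The main obstacle is the mixed $zw$-cases, which must be handled by coupling the stability inequalities for $P$ with an AM-GM-type estimate in the $w$'s, analogous to the step $bc(w_1+w_2)^2/4\ge ad\,w_1w_2$ appearing at the end of the proof of Theorem \ref{Main1}. Once all pairwise inequalities are checked, Proposition \ref{SR} gives the $H_0$-stability of $\PP(\tilde P)$, and hence of $\tilde P$ by Proposition \ref{polarizeit}.
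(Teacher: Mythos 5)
Your ``if'' direction is essentially fine (with the caveat that Hurwitz's theorem over $\PC$ should, as elsewhere in the paper, be justified by working over $\CC$ and transferring via Tarski's principle). The ``only if'' direction, which is the substantive one, has genuine gaps. Proposition \ref{SR} requires the inequalities $\partial_iF\,\partial_jF-F\,\partial_i\partial_jF\ge 0$ at \emph{every} real point $(\xx,\mathbf{w})\in\PR^{n+r}$, including points where the $w_k$ are negative, zero, or pairwise distinct; specializing $w_1=\cdots=w_r=1$ only checks a thin slice, and for general real $\mathbf{w}$ the polynomial $\zz\mapsto F(\zz,\mathbf{w})$ has coefficients $a_S\,e_{r-|S|}(\mathbf{w})/\binom{r}{|S|}$ of mixed sign, so its SR-inequalities are not controlled by the stability of $P$ in any direct way. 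The $ww$-cases likewise do not follow from Newton-type inequalities for the $e_k$ plus nonnegativity of the $a_S$: already for $P=a_\emptyset+a_1z_1+a_2z_2+a_{12}z_1z_2$ (so $r=2$) one computes $\partial_{w_1}F\,\partial_{w_2}F-F\,\partial_{w_1}\partial_{w_2}F=\tfrac14(a_1z_1+a_2z_2)^2-a_\emptyset a_{12}z_1z_2$, whose nonnegativity for all real $z_1,z_2$ is \emph{equivalent} to $a_\emptyset a_{12}\le a_1a_2$, i.e.\ it already encodes the stability inequality for $P$ (and fails for the nonnegative but unstable $1+z_1z_2$). So the hard content is not confined to the mixed $zw$-cases, which you in any case leave as an acknowledged obstacle; as written, the argument does not close.

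For comparison, the paper does not prove Proposition \ref{hom2} from scratch: it cites \cite{MurotaBook} and \cite{BBL} (over $\RR$ this is \cite[Theorem 4.5]{BBL}), the passage to $\PR$ being another instance of Tarski's principle. The standard proof of the hard direction is quite different from your route: one first shows that an $H_0$-stable polynomial with nonnegative coefficients is in fact $H_\theta$-stable for every $\theta\in(0,\pi)$ (in one variable: all zeros lie on the nonpositive real axis), and then for $z_1,\ldots,z_n,z_{n+1}\in H_0$ each quotient $z_j/z_{n+1}$ lies in such a rotated half-plane, whence $\tilde{P}(\zz,z_{n+1})=z_{n+1}^r\,P(z_1/z_{n+1},\ldots,z_n/z_{n+1})\neq 0$. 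If you want a self-contained argument, that is the line to pursue rather than verifying the Proposition \ref{SR} inequalities pair by pair for the polarized homogenization.
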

The proof of Theorem \ref{Main2} is now immediate. 
\begin{proof}[Proof of Theorem \ref{Main2}]
Combine Theorem \ref{Main1} and Propositions \ref{hom1} and \ref{hom2}.
\end{proof}
\section{Examples of Tropical HPP-polynomials}\label{Examples}
To illustrate Theorems \ref{Main1} and \ref{Main2} we provide here examples that show that known $M$-concave functions are tropicalizations of  HPP-polynomials. 
\begin{example}
If $a$ is a positive number then $1+az_1z_2$ has the the half-plane property with respect to the open right half-plane (since the product of two complex numbers with positive real part is never a negative real number). Let $w \in \RR$. By Tarski's principle the polynomial $1+t^{w}z_1z_2$ is a HPP-polynomial over $\PR$ (with respect to the open right half-plane). 
Let $G=(V,E)$ be a graph with no loops, $w : E \rightarrow \RR$  and define 
$$
P_G(\zz)= \prod_{ij=e\in E}(1+t^{w(e)}z_iz_j)=\sum_{\alpha \in \NN^n}a_\alpha(t)\zz^\alpha.
$$
The support of $P_G$ is the set of degree sequences of subgraphs of $G$ and is by Theorem \ref{jumpy} a constant parity jump system. The tropicalization of $P_G$ is given by 
$$
\trop(P_G)(\alpha)= \nu(a_\alpha(t))= \max \left\{ \sum_{e \in H}w(e): H \subseteq E, (V,H) \mbox{ has degree sequence } \alpha \right\}.
$$
This function is $M$-concave by Theorem \ref{Main1} (as proved by Murota \cite{Murota06}). 
\end{example}
The next example shows that the classical ``maximum weighted matching problem" is a special case of maximizing an $M$-concave function arising from a HPP-polynomial. 
\begin{example}
Let $G=(V,E)$ be a finite graph with no loops  and let $\JJ \subseteq 2^V$ be 
the set of vertices of partial matchings of $G$, i.e., $S \in \JJ$ if there is a perfect matching of the subgraph of $G$ induced by $S$. Write $\FF \leadsto S$ to indicate that the set $\FF \subseteq E$ is the set of edges of a perfect matching of the subgraph induced by $S$. Let $w : E \rightarrow \RR$ and define 
$f : \JJ \rightarrow \RR$ by 
$$
f(S)= \max\left\{ \sum_{e \in \FF}w(e): \FF \leadsto S\right\}.
$$
Murota \cite{Murota97} proved that $f$ is $M$-concave. Clearly $f = \trop(P_G)$ where 
$$
P_G(\zz)= \sum_{\FF} t^{\sum_{e \in \FF}w(e)}\prod_{ij \in \FF}z_iz_j, 
$$
and where the sum is over all partial matchings of $G$. That $P_G$ is a HPP-polynomial follows immediately from Tarski's Principle and the multivariate Heilmann-Lieb theorem, see \cite[Theorem 10.1]{COSW}. 
\end{example}
\begin{example} Let $G=(V,E)$, $V=\{1,\ldots, n\}$  be a graph with no loops and let $w : E \rightarrow \RR$, and $c : E \rightarrow \NN$. Then 
$$
P_G(\zz)= \prod_{ij=e\in E}(1+t^{w(e)}z_iz_j)^{c(e)} = \sum_{\alpha \in \NN^n} a_\alpha(t)\zz^\alpha
$$
has the half-plane property over $\PR$. By Theorem \ref{Main1} the function $f : \supp(P_G) \rightarrow \RR$ defined by $$
f(\alpha)= \nu(a_\alpha(t))= \max\left\{ \sum_{e \in E}w(e)b(e) : b(e) \in \NN \cap [0, c(e)], 
\alpha = \left(\sum_{e \in \delta(j)}b(e)\right)_{j=1}^n\right\},
$$  
where $\delta(j)$ denotes the set of edges incident to $j$, is $M$-concave. This function is studied in \cite{Murota06,MT}.
\end{example}
\begin{example}
Let $A_1(t), \ldots, A_n(t)$ be positive semi-definite $d \times d$ matrices over $\PC$. Then 
the polynomial 
$$
P(\zz) = \det\Big(z_1A_1(t)+ \cdots + z_nA_n(t)\Big)
$$
has the half-plane property over $\PR$, see \cite{Br, Sp}. Hence, $\trop(P)$ is $M$-concave. 
\end{example}

\begin{example}
If $A=A(t)$ is an $r \times n$ matrix over $\PC$ let 
$$
P_A(\zz)=\sum_{|S|=r} \det(A(t)[S]) \overline{\det(A(t)[S])} \prod_{j \in S}z_j, 
$$
where $A(t)[S]$ is the $r \times r$ minor with columns indexed by $S \subseteq \{1, \ldots, n\}$.  Then  
$P_A(\zz)$ has the half-plane property over $\PR$, see e.g., \cite{Br}. Hence 
the function $\trop(P_A)$ is $M$-concave, i.e., a valuated matroid. This is true also for fields other than $\CC$, although our method won't work. Let $\binom {[n]} r = \{ \alpha \in \{0,1\}^n : \alpha_1 + \cdots+ \alpha_n =r\}$. The space of all functions $\trop(P_A) : \binom {[n]} r \rightarrow \RR \cup \{-\infty\}$ where $A$ is an $r \times n$ matrix over $\PC$ coincides with the \emph{tropical Grassmannian}, $\Gr(r,n)$,  as studied in 
\cite{HJJS, SpSt}. In \cite{HJJS} the \emph{Dressian}, $\Dr(r,n)$, is defined as the space of $M$-concave functions (valuated matroids) 
$f : \binom {[n]} r \rightarrow \RR \cup \{-\infty\}$. Let $\HH(r,n)$ be the space of all HPP-polynomials with support contained in $\binom {[n]} r$. We have the inclusions 
$$
\Gr(r,n) \subseteq \trop(\HH(r,n)) \subseteq \Dr(r,n).
$$
We shall see that for $r=4$ and $n=8$ the inclusions are strict. A matroid $\MM$ on $[n]$ has the \emph{weak half-plane property} if there is a HPP-polynomial $P$ with support equal to the set of bases of $\MM$.  There are several matroids on $8$ elements of rank $4$ that fail to have the weak half-plane property, see \cite{BG}. For such a matroid $\MM$, let $f_\MM : \binom {[8]} 4 \rightarrow \RR \cup \{-\infty\}$ be defined by 
$$
f_\MM(S) = \begin{cases}
1 &\mbox{if $S$ is basis,} \\
-\infty &\mbox{otherwise.}
\end{cases}
 $$
 It follows that $f_\MM \in \Dr(4,8) \setminus \trop(\HH(4,8))$. 
 
 The V\'amos matroid $V_8$ is not representable over any field, see \cite{Ox}. However $V_8$ has the weak half-plane property, see \cite{WW}. It follows that 
 $f_{V_8} \in \trop(\HH(4,8))\setminus \Gr(4,8)$. 
\end{example}
\begin{example}
Let $A(t)$ be a skew symmetric $n \times n$ matrix over $\PR$. Then 
$$
\sum_{S \subseteq \{1,\ldots, n\}} \det(A[S]) \prod_{j \in S}z_j, 
$$
where $A[S]$ is the principle minor indexed by $S$, has the half-plane property over $\PR$, see \cite[Corollary 4.3]{Br}. Hence $f(S) = \nu(\det(A[S]))$ is $M$-concave, i.e., a valuated $\Delta$-matroid.  This is known to be true over any field, see \cite{DW91}. 
\end{example}

\section{Hives and Horn's Problem}\label{HIVES}
Let $\Delta_n=\{ \alpha \in \NN^3 : \alpha_1+\alpha_2 + \alpha_3=n\}$. $M$-concave functions on 
$\Delta_n$ are better known as {\em hives} and were used in the resolution of Horn's problem on eigenvalues of sums of Hermitian matrices, and in the proof of
the saturation conjecture, see \cite{Bu,KT,Sp}.  If we depict $\Delta_n$ as in Fig. \ref{hh},
\begin{figure}[htp]
 \centering
 \includegraphics[height=1.3in]{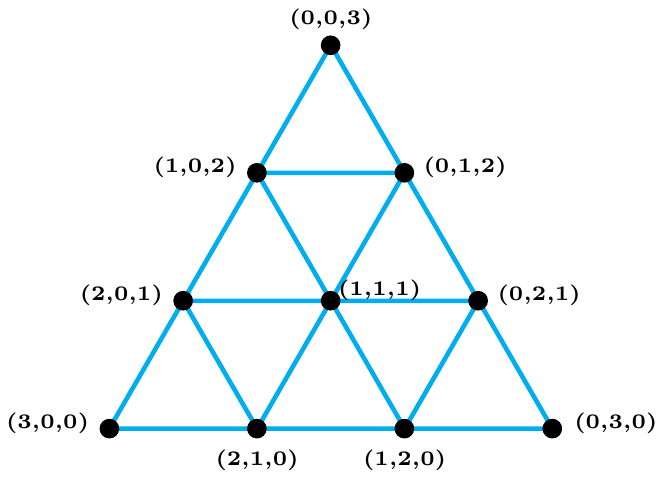}
\caption{\label{hh} $\Delta_3$.}
\end{figure}
then a function $h : \Delta_n \rightarrow \RR$ is called a hive if the {\em rhombus inequalities} in Fig. \ref{tru} are satisfied by $h$. It is clear that $M$-concave functions are hives and hives are easily seen to satisfy 
\ML, so a function on $\Delta_n$ is a hive if and only if it is $M$-concave. 
\begin{figure}[htp]
 \centering
 \includegraphics[height=0.9in]{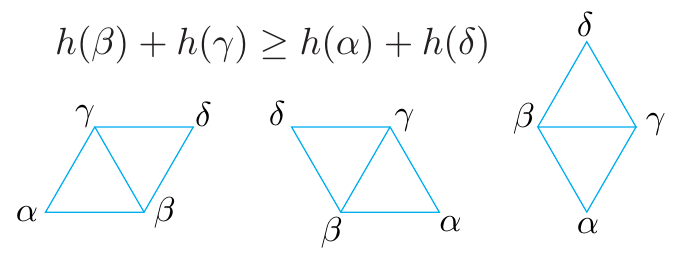}
\caption{\label{tru} Rhombus inequalities}
\end{figure}

The recently established Lax conjecture, see \cite{HV,LPR},  implies a characterization of HPP-polynomials over $\RR$ with support 
$\Delta_n$ as polynomials of the form 
\begin{equation}\label{Vinnikov}
\pm \det(xA +yB+zC), 
\end{equation}
where $A,B,C$ are positive definite symmetric (or Hermitian) $n \times n$ matrices. This makes the connection between Horn's problem and HPP-polynomials. A hive $h : \Delta_n \rightarrow \RR$ is {\em strict} if (M) (or equivalently all rhombus inequalities) hold with strict inequalities. 
\begin{theorem}[Speyer, \cite{Sp}]\label{Spe}
Suppose $P=\sum_{\alpha \in \Delta_n}a_\alpha(t)x^{\alpha_1}y^{\alpha_2}z^{\alpha_3} \in \PR[x,y,z]$ has positive coefficients and let $h= \trop(P)$. If $P$ is a HPP-polynomial then $h$ is a hive, and if $h$ is a strict hive then $P$ is a HPP-polynomial. 

Moreover, if $h : \Delta_n \rightarrow \QQ$ is a hive then there is a HPP-polynomial $P \in \PR[x,y,z]$ with $h= \trop(P)$.
\end{theorem}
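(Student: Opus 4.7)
The first implication is immediate from Theorem~\ref{Main1}: $\Delta_n$ is a constant-sum (hence constant-parity) jump system, so $\trop(P)=h$ is $M$-concave on $\Delta_n$, and on this triangular set the local axiom \ML{} unpacks to precisely the rhombus inequalities of Figure~\ref{tru}, so $M$-concavity coincides with being a hive.

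For the strict-hive direction, my plan is to reduce the multivariate $H_0$-stability of $P$ to univariate Hardy--Hutchinson, via the line-restriction characterization of $H_0$-stability recorded in the remark after Theorem~\ref{jumpy}: $P\in\PR[x,y,z]$ is $H_0$-stable iff the univariate polynomial $p(s)=P(\xx+s\yy)=\sum_k c_k(t)\,s^k$ has only real roots for every $\xx\in\PR^3$ and every positive direction $\yy\in(\PR_{>0})^3$. By Remark~\ref{onevar} (which packages Theorem~\ref{Hthm} with Tarski's principle) it suffices to establish the strict valuation inequality $2\nu(c_k)>\nu(c_{k-1})+\nu(c_{k+1})$ for all interior $k$. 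I would take $\xx=(t^{u_1},t^{u_2},t^{u_3})$ and $\yy=(t^{v_1},t^{v_2},t^{v_3})$ with $u,v\in\RR^3$ tropically generic. Then each coefficient $c_k(t)$ is dominated by a single monomial of $P$, indexed by some $\alpha^{(k)}\in\Delta_n$, and as $k$ increases by one the optimizing vector $\alpha^{(k)}$ changes by an elementary step, so the triple $(\alpha^{(k-1)},\alpha^{(k)},\alpha^{(k+1)})$ together with its binomial-coefficient accompaniment records a rhombus of $\Delta_n$. Strictness of $h$ at that rhombus yields the desired strict inequality on the $\nu(c_k)$. The main obstacle is the combinatorial matching step: one must check that every rhombus of $\Delta_n$ is realised for some admissible $(u,v)$, and extend from tropically generic to arbitrary $\xx,\yy$ by a Tarski-based density argument.

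For the existence statement, my plan is to perturb to a strict hive and transfer by Tarski. Fix a strictly concave $g:\Delta_n\to\QQ$, for instance $g(\alpha)=-\alpha_1^2-\alpha_2^2-\alpha_3^2$, and for rational $\epsilon>0$ set $h_\epsilon=h+\epsilon g$, which is a strict rational hive. The previous paragraph applied to $P_\epsilon=\sum_{\alpha\in\Delta_n}t^{h_\epsilon(\alpha)}\,x^{\alpha_1}y^{\alpha_2}z^{\alpha_3}$ yields an HPP polynomial. The set of positive coefficient vectors $(a_\alpha)\in(\PR_{>0})^{\Delta_n}$ for which $\sum a_\alpha\,t^{h(\alpha)}\,x^{\alpha_1}y^{\alpha_2}z^{\alpha_3}$ is HPP over $\PR$ is semi-algebraic, and the perturbed construction shows that it has nonempty boundary at $\epsilon=0$; extracting a suitable boundary point (equivalently, phrasing the whole implication first-order and combining Tarski's principle with a multivariate Hurwitz argument over $\PC$) produces the sought-after $P$ with $\trop(P)=h$. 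The main obstacle is making the limit rigorous within the Puiseux field, which forces one to argue first-order rather than topologically.
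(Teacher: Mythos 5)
Your treatment of the forward direction agrees with the paper: it is the specialization of Theorem~\ref{Main1} to $\Delta_n$, where \ML{} unpacks to the rhombus inequalities of Fig.~\ref{tru}.

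The strict-hive direction has a genuine gap at the reduction step. You invoke the criterion that $P$ is $H_0$-stable iff $s\mapsto P(\xx+s\yy)$ is real-rooted for \emph{every} $\xx\in\PR^3$ and every $\yy\in(\PR_{>0})^3$, but you only verify this for base points of the monomial form $\xx=(t^{u_1},t^{u_2},t^{u_3})$. Hutchinson's theorem (Theorem~\ref{Hthm}) requires all coefficients $c_k$ to be positive; for a general $\xx$, with negative or mixed-sign multi-term coordinates, the $c_k(t)$ are signed sums with cancellation, so the Hardy--Hutchinson route is unavailable on those lines, and no ``Tarski-based density argument'' can bridge the gap: the monomial points never leave the positive orthant and form a nowhere dense subset of $\PR^3$, so real-rootedness along your special lines says nothing about the remaining lines. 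This reduction is exactly what the paper's Lemma~\ref{basic} supplies: for a homogeneous polynomial with nonnegative coefficients it suffices to check real-rootedness of the five explicit one-parameter restrictions $P(x,1,\lambda)$, $P(1,y,\lambda)$, $P(1,\lambda,z)$ (for $\lambda>0$), $P(1,y,0)$ and $P(1,0,z)$, and the proof of that lemma is a nontrivial zero-tracking argument along a homotopy of lines, not a density statement. Once that lemma is in hand, your observation that $\nu(c_k)$ is the marginal maximum of the strict hive $h(\alpha)+\alpha_2\nu(\lambda)$ over $\{\alpha_3=k\}$, hence strictly concave in $k$, is the same computation the paper performs; you do not need the optimizer to move by an elementary step.

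The ``Moreover'' part also does not close as written: $\trop(P_\epsilon)=h+\epsilon g\neq h$ for every $\epsilon>0$, and there is no topology on $\PR$ in which one can let $\epsilon\to 0$ while retaining control of both stability and the tropicalization; extracting a boundary point of a semialgebraic set does not hand you a polynomial with $\trop(P)=h$. The fix is to perturb the \emph{constants} rather than the exponents: take $a_\alpha=Q^{-\binom{\alpha_1}{2}-\binom{\alpha_2}{2}-\binom{\alpha_3}{2}}\,t^{h(\alpha)}$ with $Q=2(n-1)$. These prefactors have valuation $0$, so $\trop(P)=h$ exactly, while every rhombus quotient becomes at least $Q$, and the paper's quantitative Theorem~\ref{quant}(b) (Hutchinson's $a_k^2\ge 4a_{k-1}a_{k+1}$ criterion run through Lemma~\ref{basic}, transferred to $\PR$ by Tarski) then yields the half-plane property even for non-strict hives.
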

Note that Theorem \ref{Spe} implies that the tropicalization of the space HPP-polynomials with support $\Delta_n$ coincides with the space of all hives with support $\Delta_n$. 

Horn conjectured a characterization of  all possible triples $\alpha^1, \alpha^2, \alpha^3 \in \RR^n$ such that 
for $j=1,2,3$, $\alpha^j$ are the eigenvalues of a Hermitian $n \times n$ matrix $A_j$ and  $A_1+A_2=A_3$. Horn's conjecture was first proved by Klyachko \cite{Kl} and Knutson-Tao \cite{KT}, see \cite{Fu} for a survey. 
 Speyer \cite{Sp} used his theorem and \eqref{Vinnikov} to give a new proof of Horn's conjecture.  The  proof uses Viro's patchworking method. We give here a short proof of Theorem \ref{Spe} based on Remark \ref{onevar} and the following simple lemma. 
\begin{lemma}\label{basic}
Suppose that $P(x,y,z) \in \RR[x,y,z]$ is a homogeneous polynomial with nonnegative coefficients such that $P(0,0,1)P(0,1,0)>0$ and the univariate polynomials $x \mapsto P(x, 1, \lambda)$, $y \mapsto P(1,y,\lambda)$, $z \mapsto P(1,\lambda,z)$, $y \mapsto P(1,y,0)$ and $z \mapsto P(1,0,z)$ have only real zeros, for all $\lambda >0$. Then $P$ has the half-plane property i.e., it is $H_0$-stable. 
\end{lemma}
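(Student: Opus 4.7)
My plan is to deduce $H_0$-stability of $P$ from a hyperbolicity cone argument together with a reduction to the bivariate specializations $P(1,y,z),\ P(x,1,z),\ P(x,y,1)$.

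The first observation is that $P$ has nonnegative coefficients with $P(0,1,0), P(0,0,1)>0$, so $P$ is strictly positive on the open orthant $(0,\infty)^3$. If I can establish that $P$ is hyperbolic with respect to some direction $\mathbf{e}_0\in (0,\infty)^3$ (equivalently, on its closure with $P(\mathbf{e}_0)>0$), then the hyperbolicity cone of $(P,\mathbf{e}_0)$ is an open, convex, connected subset of $\{P>0\}$ containing $\mathbf{e}_0$; because the connected set $(0,\infty)^3$ lies in $\{P>0\}$, the cone must engulf all of $(0,\infty)^3$, so $P$ is hyperbolic with respect to every direction in $(0,\infty)^3$, which is precisely $H_0$-stability for a real-coefficient polynomial. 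So the entire task reduces to producing \emph{one} direction of hyperbolicity in the positive orthant.

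I would obtain this by a scaling/minimum-argument trick. Assume for contradiction that $P(x_0,y_0,z_0)=0$ with $x_0,y_0,z_0\in H_0$, and let $\theta_i\in (0,\pi)$ denote the arguments. Relabel so that $\theta_1$ is the smallest. Homogeneity gives $P(1,y_0/x_0,z_0/x_0)=0$, and by the choice of $\theta_1$ we have $\arg(y_0/x_0),\arg(z_0/x_0)\in [0,\pi)$, so both ratios lie in $\overline{H_0}$. If they lie strictly in $H_0$, this contradicts the bivariate $H_0$-stability of $P(1,y,z)$. If one of them (say $y_0/x_0$) is a positive real $\lambda$, hypothesis (c) forces $P(1,\lambda,z)$ to be real-rooted in $z$, so $z_0/x_0$ must be real, contradicting $z_0/x_0\in H_0$; the symmetric tie-case uses hypothesis (b). Finally, if all three arguments coincide then $(x_0,y_0,z_0)$ is a complex multiple of a vector in $(0,\infty)^3$, and positivity of $P$ there forbids the vanishing. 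The cases where the minimum argument is $\theta_2$ or $\theta_3$ are handled symmetrically using the bivariate stability of $P(x,1,z)$ or $P(x,y,1)$ and the appropriate one of hypotheses (a), (b), (c).

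The key ingredient this strategy requires is the bivariate $H_0$-stability of each of the three specializations $P(1,y,z)$, $P(x,1,z)$, $P(x,y,1)$, and this is where the main obstacle lies. A naive attempt using univariate real-rootedness of slices alone cannot succeed, since the polynomial $Q(y,z)=1+yz$ satisfies such slice real-rootedness yet is not $H_0$-stable (witnessed by $Q(i,i)=0$). What saves the situation is that $P(1,y,z)$ carries the additional structure provided by $P(0,1,0),P(0,0,1)>0$: the bivariate $Q=P(1,y,z)$ has total degree $n$ with positive $y^n$ and $z^n$ leading monomials, and by homogeneity plus Hurwitz the hypothesis (b),(c),(d),(e) gives real-rootedness of $Q(y,\lambda)$ and $Q(\lambda,z)$ for every $\lambda\geq 0$. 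The proof of bivariate stability must exploit the positivity of these corner monomials---for instance by a bivariate analogue of the scaling argument above or by a Hermite--Biehler/Obreshkov-type criterion---and this is the most delicate step; once it is in hand the trivariate conclusion is immediate from the reduction above.
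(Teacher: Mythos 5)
There is a genuine gap: your argument reduces the trivariate claim to the $H_0$-stability of the bivariate dehomogenizations $P(1,y,z)$, etc., and then stops exactly there, acknowledging that this bivariate stability is ``the most delicate step'' and gesturing at ``a bivariate analogue of the scaling argument'' or ``a Hermite--Biehler/Obreshkov-type criterion'' without carrying either out. But that bivariate step is the entire content of the lemma. The reduction you spend your first two paragraphs on is essentially a known one-liner for homogeneous polynomials with nonnegative coefficients (the paper simply cites \cite[Theorem 4.5]{BBL}: such a $P$ is $H_0$-stable iff $P(1,y,z)$ is), and your own example $1+yz$ correctly shows that real-rootedness of the one-dimensional slices cannot by itself yield bivariate stability --- so an actual argument exploiting homogeneity and $P(0,1,0),P(0,0,1)>0$ is indispensable and is missing from your write-up.

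For comparison, the paper closes this gap with a zero-tracking homotopy. Supposing $P(1,y_0,z_0)=0$ with $y_0,z_0\in H_0$ and (after the easy case $\Arg y_0=\Arg z_0$, which you also handle) $\Arg y_0>\Arg z_0$, one deforms $y(s)=s+(1-s)y_0$ from $y_0$ to $1$ and follows a continuous branch $z(s)$ of zeros of $z\mapsto P(1,y(s),z)$; these zeros stay bounded because $P(0,0,1)\neq 0$ keeps the leading coefficient in $z$ away from zero. Every possible behaviour of the trajectory is then excluded by one of the hypotheses: if $z(s)$ stays in $H_0$ one contradicts real-rootedness of $z\mapsto P(1,1,z)$; if it hits a positive real one contradicts $y\mapsto P(1,y,\lambda)$; if it hits $0$ one contradicts $y\mapsto P(1,y,0)$; and if it exits through the negative reals, the continuous function $\Arg y(s)-\Arg z(s)$ changes sign, so at some $s_1$ the arguments agree and homogeneity gives $P(y^{-1},1,\lambda)=0$ with $\lambda>0$, contradicting $x\mapsto P(x,1,\lambda)$. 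To complete your proof you would need to supply an argument of this kind (or an equivalent substitute); as written, the proposal assumes the hard part.
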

\begin{proof}
Since $P$ is homogeneous with nonnegative coefficients it has the half-plane property if and only if 
$P(1,y,z)$ is upper half-plane stable, see e.g., \cite[Theorem 4.5]{BBL}. Suppose that $P$ satisfies the hypothesis in the  lemma  and that $P(1,y_0,z_0)=0$ for some $y_0, z_0 \in H_0$. If 
$\Arg(y_0)=\Arg(z_0)$ then there is a $\lambda>0$ such that $P(y_0^{-1},1,\lambda)=0$ which is a contradiction. By symmetry we may assume that $\Arg(y_0) > \Arg(z_0)$.  Let $y(s)= s +(1-s)y_0$ where $0 \leq s \leq 1$. Since $P(0,0,1) \neq 0$ the zeros of the polynomials $z \mapsto  P(1,y(s),z)$ where $s \in [0,1]$ are bounded. Let $[0,1] \ni s \mapsto z(s)$ be a continuous curve\footnote{Such a curve always exists. In fact if $F(s,y)= \sum_{k=0}^M Q_k(s) z^k \in \CC[s,z]$ with $Q_M(s) \neq 0$ for all $s \in [0,1]$ we can find a continuous parametrization $[0,1] \ni s \mapsto (z_1(s), \ldots, z_M(s)) \in \CC^M$  of the zeros of $z \mapsto F(s,z)$.  First we may assume $F$ is irreducible.  Then the discriminant, $\Delta(s)$, of $F$ with $s$ fixed  is not identically zero as a polynomial in $s$ (otherwise $F$ would have a multiple factor by Gauss' lemma). Thus there is a finite number of $s \in [0,1]$ where $z \mapsto F(s,z)$ has multiple zeros, and these divide  $[0,1]$ into a finite number of  open intervals. By Hurwitz' theorem on the continuity of zeros (see e.g., \cite{Ma}) we can find a continuous parametrization in each open interval. By continuity of the zeros as a multiset we can reorder the zeros so that the parametrizations glue together to a continuous parametrization for $[0,1]$. }
 in $\CC$ such that $z(0)=z_0$ and $P(1,y(s),z(s))=0$. We have the following possible cases. 
\begin{enumerate}
\item If $z(s) \in H_0$ for all $s \in [0,1]$ then $P(1,1, z(1)) =0$;
\item  If $z(s)> 0$ for some $s \in [0,1]$ then $P(1,y,\lambda)=0$  for some $y \in H_0\cup \{1\}$ and $\lambda>0$; 
\item If $z(s)=0$ for some $s\in [0,1]$, then $P(1,y,0)=0$ for some $y \in H_0\cup \{1\}$;  
\item If there is an $s_0 \in (0,1]$ for which $z(s) \in H_0$ for $0 \leq s<s_0$ and 
$z(s_0) <0$, let 
$
\delta(s) = \Arg(y(s))-\Arg(z(s)).
$
Then $\delta(0)>0$ and $\delta(s_0)= \Arg(y(s_0))-\pi <0$ so by continuity $\delta(s_1)=0$ for some $0<s_1<s_0$. But then 
$P(1, y, \lambda y)=y^dP(y^{-1},1,\lambda)=0$ for  $y=y(s_1) \in H_0$ and $\lambda>0$.
\end{enumerate}
Since all cases above lead to contradictions $P$ must be a HPP-polynomial.
\end{proof}
\begin{proof}[Proof of Theorem \ref{Spe}] One direction is just a special case of Theorem \ref{Main1} so assume that 
$P$ is as in Theorem \ref{Spe} with $h$  a strict hive.  Lemma \ref{basic} also holds for $\PR$ by Tarski's Principle.  Since 
$2h(\alpha_1,\alpha_2,0)> h(\alpha_1-1,\alpha_2+1,0)+ h(\alpha_1+1,\alpha_2-1,0)$ Remark \ref{onevar} verifies that $P(1,y,0)$ is $H_0$-stable. Let $P(1,\lambda,z)= 
\sum_{k =0}^na_k(t)z^k$. Then $\nu(a_k(t))= \max\{h(\alpha)+\alpha_2\nu(\lambda) : \alpha \in \Delta_n \mbox{ and } \alpha_3=k\}$. Since also $h(\alpha)+\alpha_2\nu(\lambda)$ is a strict hive, (M) implies 
$2\nu(a_k(t))> \nu(a_{k-1}(t))+\nu(a_{k+1}(t))$ for all $1 \leq k<n$  which by Remark \ref{onevar} proves that $P(1,\lambda,z)$ is a HPP-polynomial. This proves the theorem by Lemma \ref{basic} and Tarski's principle.
\end{proof}
We may  also derive a quantitative version of Theorem \ref{Spe}. The {\em rhombus quotients} of a 
homogeneous polynomial 
$\sum_{\alpha \in \Delta_n}a_\alpha x^{\alpha_1}y^{\alpha_2}z^{\alpha_3} \in \RR[x,y,z]$ with positive coefficients are the set 
of quotients 
$
{a_\beta a_\gamma}/{a_\alpha a_\delta},
$
where $\alpha, \beta, \gamma, \delta$ form a rhombus as in Fig.~\ref{tru}.
\begin{theorem}\label{quant}
Let $P(x,y,z) = \sum_{\alpha \in \Delta_n}a_\alpha x^{\alpha_1}y^{\alpha_2}z^{\alpha_3}$ be a homogenous polynomial of degree $n$ with positive coefficients. 
\begin{itemize}
\item[(a)] If $P$ is a HPP-polynomial and $\alpha, \beta, \gamma, \delta$ is a rhombus as in Fig. \ref{tru} then 
$$
\frac {a_\beta a_\gamma} {a_\alpha a_\delta} \geq \frac {\ell +1}{2\ell}, 
$$
where $\ell$ is the common coordinate of $\beta$ and $\gamma$; 
\item[(b)] If all rhombus quotients are greater or equal to $2(n-1)$ then $P$ is a HPP-polynomial.
\end{itemize}
\end{theorem}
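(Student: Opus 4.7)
For part~(a), the plan is to reduce the general rhombus inequality to a single cubic case via differentiation, then verify the cubic case using the Lax/Vinnikov representation. Given the rhombus with acute corner $\mu = (\mu_1,\mu_2,\mu_3)$ so that $\alpha=\mu$, $\beta=\mu+(1,-1,0)$, $\gamma=\mu+(1,0,-1)$, $\delta=\mu+(2,-1,-1)$ and $\ell=\mu_1+1$, I would form
$$P' \;=\; \tfrac{1}{(\mu_1-1)!(\mu_2-1)!(\mu_3-1)!}\;\partial_x^{\mu_1-1}\partial_y^{\mu_2-1}\partial_z^{\mu_3-1}\,P.$$
Since partial derivatives preserve the half-plane property, $P'$ is an $H_0$-stable homogeneous cubic. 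Its coefficients at $(1,1,1),(2,0,1),(2,1,0),(3,0,0)\in\Delta_3$ equal $a_\alpha,a_\beta,a_\gamma,a_\delta$ times explicit binomial factors, and a short computation gives $\frac{a_\beta a_\gamma}{a_\alpha a_\delta} = \frac{2(\mu_1+2)}{3(\mu_1+1)}\cdot\frac{c_{2,0,1}c_{2,1,0}}{c_{1,1,1}c_{3,0,0}}$. The bound $\frac{\ell+1}{2\ell}$ thus reduces to the cubic bound $\frac{3}{4}$, after which permuting $x,y,z$ covers the other two rhombus orientations.

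For the cubic bound I would invoke the Lax conjecture cited in the paper to write $P'=\det(xA+yB+zC)$ with $A,B,C$ positive definite symmetric $3\times3$ matrices. Expressing the four coefficients as multinomial factors times mixed discriminants $D(A^{[a]},B^{[b]},C^{[c]})$, the inequality $\frac{c_{2,0,1}c_{2,1,0}}{c_{1,1,1}c_{3,0,0}}\geq\frac{3}{4}$ unwraps to $2D(A,A,B)D(A,A,C)\geq\det(A)\,D(A,B,C)$. A congruence $A=S^TS$ reduces both sides by the same factor $\det(A)$ and lets me take $A=I$; in that case the three mixed discriminants are $\tfrac{1}{3}\mathrm{tr}(B)$, $\tfrac{1}{3}\mathrm{tr}(C)$ and $\tfrac{1}{6}(\mathrm{tr}(B)\mathrm{tr}(C)-\mathrm{tr}(BC))$, and the claim collapses to $\mathrm{tr}(B)\mathrm{tr}(C)+3\,\mathrm{tr}(BC)\geq0$, which is immediate because $B,C\succ0$ implies $\mathrm{tr}(BC)>0$.

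For part~(b), I would mimic the structure of the proof of Theorem~\ref{Spe}, replacing its tropical Remark~\ref{onevar} by the real Hardy--Hutchinson criterion (Theorem~\ref{Hthm}). By Lemma~\ref{basic} it suffices to show, for every $\lambda>0$, that each of the five univariate slices has real zeros, and for that it is enough to verify $a_k^2\geq 4\,a_{k-1}a_{k+1}$ on its coefficients. On the boundary slices $P(1,y,0)$ and $P(1,0,z)$ the three relevant coefficients form a collinear lattice triple on an edge of $\Delta_n$; there I would add two rhombus inequalities of complementary types whose two ``auxiliary'' inner vertices appear once as acute and once as obtuse and therefore telescope, yielding $2h(\beta)-h(\alpha)-h(\gamma)\geq 2\log(2(n-1))$ with $h=\log$, i.e.\ $a_\beta^2/(a_\alpha a_\gamma)\geq(2(n-1))^2\geq 4$ for $n\geq 2$. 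For triples in the interior of $\Delta_n$ (arising from diagonal $\lambda$-coefficients of the interior slices) four rhombi of the three types are available, and they can be summed so that all four auxiliary vertices cancel pairwise, again giving the same bound.

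The main obstacle is that the polynomial inequality $c_k(\lambda)^2\geq 4 c_{k-1}(\lambda)c_{k+1}(\lambda)$ for an interior slice is not literally coefficient-wise: only the diagonal $\lambda^{2j}$ contribution reduces to a single collinear log-concavity handled by the four-rhombus argument. The off-diagonal contributions involve pairs of the form $B_jD_{j'}+B_{j'}D_j$ with $j\neq j'$ that are \emph{not} themselves the corners of a rhombus. The delicate step is to combine the line log-concavities at two heights $j$ and $j'$ via Cauchy--Schwarz/AM--GM so that the factor $(2(n-1))^2$ suffices to dominate these cross-terms; making this step go through cleanly, and with constant exactly $2(n-1)$, is the technical heart of part~(b).
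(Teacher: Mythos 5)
Your part (a) is correct and takes a genuinely different route from the paper, which simply cites Speyer for this inequality. The reduction by $\tfrac{1}{(\mu_1-1)!(\mu_2-1)!(\mu_3-1)!}\partial_x^{\mu_1-1}\partial_y^{\mu_2-1}\partial_z^{\mu_3-1}$ to a cubic, the bookkeeping constant $\tfrac{2(\mu_1+2)}{3(\mu_1+1)}$, and the mixed-discriminant computation (after the congruence $A=I$ the target collapses to $\mathrm{tr}(B)\mathrm{tr}(C)+3\,\mathrm{tr}(BC)\ge 0$, which holds since $\mathrm{tr}(BC)=\mathrm{tr}(B^{1/2}CB^{1/2})>0$) all check out. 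What this buys is a self-contained proof modulo the determinantal representation \eqref{Vinnikov}, which the paper already invokes. Two small repairs: the differentiation is only legal when $\mu_1\ge 1$ (for an $\ell=1$ rhombus you land on a quadratic, where the analogous computation needs only $\mathrm{tr}(BC)\ge 0$ to give the bound $1=\tfrac{\ell+1}{2\ell}$), and you should note that $P'$ retains full support $\Delta_3$, so that the representation with positive definite $A,B,C$ applies.

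Part (b), however, has a genuine gap, and you have located it yourself: the inequality $c_k(\lambda)^2\ge 4\,c_{k-1}(\lambda)c_{k+1}(\lambda)$ for the interior slices is exactly the step you leave open, and without it Theorem \ref{Hthm} cannot be applied. The paper kills the $\lambda$-dependence before it arises: since $\beta+\gamma=\alpha+\delta$ for every rhombus, the substitution $y\mapsto\lambda y$ leaves all rhombus quotients unchanged, so one may assume $\lambda=1$ outright. The remaining single estimate is then done not by pairing rhombus inequalities as you propose, but by writing $h=\log_Q a$ as $h_0+h_1$ with $h_0(i,j,k)=-\binom{i}{2}-\binom{j}{2}-\binom{k}{2}$ and $h_1$ a hive, extending $h_1$ concavely to the real triangle, and bounding $\sum_{\alpha\in D_{k-1},\,\delta\in D_{k+1}}Q^{h(\alpha)+h(\delta)}$ through the midpoints $(\alpha+\delta)/2$: each midpoint either lies in $D_k$ or is the midpoint of a unique adjacent pair $\{\beta,\gamma\}\subseteq D_k$, each such point is hit by at most $n-k$ pairs $(\alpha,\delta)$, and the gain $R(\alpha,\delta)=2h_0((\alpha+\delta)/2)-h_0(\alpha)-h_0(\delta)\ge 1$ (with $\ge 2$ off the rhombi) supplies exactly the factor needed to conclude $a_k^2\ge 4a_{k-1}a_{k+1}$. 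Your proposed Cauchy--Schwarz treatment of the cross terms $B_jD_{j'}+B_{j'}D_j$ is not obviously salvageable with the constant $2(n-1)$, and in any case it becomes unnecessary once the $\lambda=1$ reduction is made.
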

\begin{proof}
For (a) see \cite{Sp}. 

Let $P$ be as in the statement of the theorem with all rhombus quotients greater or equal to $Q= 2(n-1)$, where $n \geq 2$. Set 
$a_\alpha = Q^{h(\alpha)}$. We want to prove that $P$ satisfies the conditions in Lemma \ref{basic}. We prove that all zeros of $z \mapsto P(1,\lambda,z)=\sum_{k=0}^n a_k z^k$ are real, the other cases follow similarly. Since the polynomial $P(x,\lambda y, z)$ also has all rhombus quotients  greater or equal to $Q$, we may assume that $\lambda=1$. By assumption 
$h(\beta)+h(\gamma) \geq h(\alpha)+h(\delta)+1$ for each rhombus as in Fig.~\ref{tru}. Hence we may write $h$ as $h=h_0+h_1$, where $h_0(i,j,k)= -\binom i 2 -\binom j 2 - \binom k 2$ and $h_1$ is a hive. 
The extension (linearly on all small triangles) of a hive  to the set $\Delta^\RR_n=\{(x,y,z) \in \RR^3 : x,y,z \geq 0 \mbox{ and } x+y+z=n\}$ is concave (see \cite{Bu,MurotaBook}) and we denote this extension by the same symbol. 
Let $D_k=\{ \alpha \in \Delta_n : \alpha_3=k\}$ and let $R: D_{k-1}\times D_{k+1} \rightarrow \NN$ be defined by $R(\alpha, \delta)= 2h_0((\alpha+\delta)/2)-h_0(\alpha)-h_0(\delta)$. Then $R(\alpha, \delta) \geq 1$, and $R(\alpha, \delta) \geq 2$ unless $\alpha$ and $\delta$ are in the same rhombus. Hence
\begin{eqnarray*}
a_{k+1}a_{k-1} &=& \mathop{\sum_{\alpha \in D_{k-1}}}_{\delta \in D_{k+1}}Q^{h(\alpha)+h(\delta)} 
\leq \mathop{\sum_{\alpha \in D_{k-1}}}_{\delta \in D_{k+1}}Q^{2h((\alpha+\delta)/2)-R(\alpha,\delta)}  \\
&\leq& (n-k)\frac {Q^{-1}}{2} \mathop{\sum_{\beta, \gamma \in D_k}}_{\beta \neq \gamma}Q^{h(\beta)+h(\gamma)} + (n-k)Q^{-2}\sum_{\gamma \in D_k}Q^{2h(\gamma)} \\
&\leq& (n-k)\frac {Q^{-1}} 2 \sum_{\beta,\gamma \in D_{k}}Q^{h(\beta)+h(\gamma)}  =  (n-k)\frac {Q^{-1}} 2a_k^2. 
\end{eqnarray*}
The second inequality comes from splitting the previous sum into  two sums, $S_1+S_2$, one where $\kappa:=(\alpha+\delta)/2 \notin D_k$  and the other where $\kappa \in D_k$. If 
$\kappa \notin D_k$ then $\kappa = (\beta +\gamma)/2$ for a unique $\{\beta, \gamma\} \subseteq  D_k$ for which $|\beta-\gamma|=2$.  There are at most 
$n-k$ pairs $\alpha \in D_{k-1}, \delta \in D_{k+1}$ for which $(\alpha+\delta)/2=\kappa$ for a specific $\kappa \notin D_k$. Also, $2h((\alpha+\delta)/2)-R(\alpha,\delta) \leq h(\beta)+h(\gamma)-1$ which explains the first sum in the second row. The second sum $S_2$ is estimated similarly. Hence $a_k^2 \geq 4 a_{k-1}a_{k+1}$ which by Theorem \ref{Hthm} proves that all zeros of $z \mapsto P(1,\lambda,z)$ are real and the theorem follows.
\end{proof}

\section{Higher Dimensional Hives}
Let $\Delta_n^m= \{ \alpha \in \NN^m : \alpha_1 + \cdots + \alpha_m=n\}$. We extend the definition of   a hive to mean an $M$-concave function on $\Delta_n^m$. It is natural to ask if an analog of Theorem \ref{Spe} holds for $\Delta_n^m$ when $m>3$. In particular one may ask if all higher dimensional hives are tropicalizations of HPP-polynomials.
\begin{question}\label{dense}
Suppose that $h : \Delta_n^m \rightarrow \RR$ is $M$-concave. Is there a HPP-polynomial 
$P$ such that 
$\trop(P)=h$. 
\end{question}
We will prove in this section that Question \ref{dense} is not true in general. Suppose that $P = \sum_{\alpha \in \NN^n}a_\alpha(t)\zz^\alpha \in \PR[z]$ has nonnegative coefficients. We say that $P$ is an $M$\emph{-polynomial} if $\supp(P)$ has constant parity and $\trop(P)$ is $M$-concave. 
\begin{proposition}\label{operations}
Let $P \in \PR[z_1,\ldots, z_n]$ be an $M$-polynomial. Then so are (unless identically zero) 
\begin{enumerate}
\item $P(z_1+w_1,z_2, \ldots, z_n)$ where $w_1$ is a new variable; 
\item $P(z_1,z_1, z_3,\ldots, z_n)$; 
\item $P(\xi z_1, z_2, \ldots, z_n)$ whenever $\xi \in \PR$ is nonnegative; 
\item ${\partial P}/{\partial z_1}$. 
\end{enumerate}
\end{proposition}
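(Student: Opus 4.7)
The strategy is to handle (1)--(4) individually, checking that nonnegativity of coefficients, constant parity of the support, and $M$-concavity of the tropicalization are all preserved. Nonnegativity is immediate in each case. Constant parity is routine because $|a|+|b|$ and $|a+b|$ always have the same parity, so the parity of $|\gamma-\gamma'|$ on the new support matches that of $|\alpha-\alpha'|$ on the old. Since $M$-concavity on a constant-parity set implies axiom (J2) and hence the jump-system property, it suffices to verify $M$-concavity of $\trop$ in each case.

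Operations (3) and (4) are routine. For (3) with $\nu(\xi)>-\infty$, the support is unchanged and $\trop(P(\xi z_1,\ldots))(\alpha)=\trop(P)(\alpha)+\alpha_1\nu(\xi)$, and $M$-concavity is preserved by the addition of a linear function. For $\xi=0$ the support restricts to $\{\alpha\in\JJ:\alpha_1=0\}$, and a step $s$ between two such points has $s_1=0$; any step $t$ furnished by $M$-concavity of $f$ on $\JJ$ is then forced to have $t_1=0$ as well (the alternatives $t=\pm e_1$ would make $(\alpha+s+t)_1$ or $(\beta-s-t)_1$ equal to $-1$, violating $\NN^n$-membership), so the restriction inherits $M$-concavity. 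For (4), $\nu(\beta_1+1)=0$ gives $\trop(\partial P/\partial z_1)(\beta)=f(\beta+e_1)$ on $\{\alpha-e_1:\alpha\in\JJ,\ \alpha_1\ge 1\}$; given $\beta\stackrel{s}{\to}\beta'$, one lifts to $\alpha=\beta+e_1,\ \alpha'=\beta'+e_1\in\JJ$, applies $M$-concavity of $f$ to obtain a step $t$, and verifies by a short case analysis that $(\alpha+s+t)_1,(\alpha'-s-t)_1\ge 1$ automatically (the only potential failures, $t=\mp e_1$ with $\alpha_1$ or $\beta_1$ equal to one, are excluded by the distance-reducing requirement on $t$).

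Operations (1) and (2) are dual, both governed by the aggregation map $\phi(\beta_0,\beta_1,\alpha_2,\ldots,\alpha_n)=(\beta_0+\beta_1,\alpha_2,\ldots,\alpha_n)$. For (1), the binomials $\binom{\beta_0+\beta_1}{\beta_0}$ have valuation $0$, so $\trop(Q)=f\circ\phi$ on $\phi^{-1}(\JJ)\cap\NN^{n+1}$. Given $\gamma\stackrel{s}{\to}\gamma'$, I split into two cases. Case (a): $\phi(s)$ is a valid step in $\JJ$ from $\phi(\gamma)$ toward $\phi(\gamma')$ (automatic when $s=\pm e_i$ with $i\ge 2$, and holding for $s\in\{\pm e_0,\pm e_1\}$ precisely when its sign agrees with that of $\phi(\gamma')_1-\phi(\gamma)_1$). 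Then $M$-concavity of $f$ yields a step $t'$, which one lifts to $\tilde\JJ$: lifts $t=\pm e_{j+1}$ are immediate when $t'=\pm e_j$ with $j\ge 2$, and when $t'=\pm e_1$ one chooses $t\in\{\pm e_0,\pm e_1\}$ according to which of $\beta_0$ or $\beta_1$ admits the step; at least one does, by the inequality $\alpha_1+\phi(s)_1\lessgtr\alpha'_1$ provided by $M$-concavity. Case (b): $s\in\{\pm e_0,\pm e_1\}$ but $\phi(s)$ opposes $\phi(\gamma')-\phi(\gamma)$. Then the pairs $(\beta_0,\beta_0')$ and $(\beta_1,\beta_1')$ have opposite orderings, and taking $t$ to be the opposite sign in the complementary first-two coordinate makes $\phi(\gamma+s+t)=\phi(\gamma)$ and $\phi(\gamma'-s-t)=\phi(\gamma')$, so the $M$-concavity inequality holds with equality.

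For (2), nonnegativity of the $a_\alpha(t)$ rules out cancellations in $\sum_{\alpha_1+\alpha_2=\beta_1} a_{(\alpha_1,\alpha_2,\alpha_3,\ldots)}(t)$, so $\trop(R)(\beta)=\max\{f(\alpha):\phi(\alpha)=\beta\}$. That max-aggregation preserves $M$-concavity is a standard result of Murota's discrete convex analysis; alternatively it can be proved directly by choosing, in each fiber of $\phi$, an $f$-optimal representative and using $M$-concavity of $f$ (applied to adjacent optimizers, possibly after shifting mass between the two aggregated coordinates within a single fiber) to construct, for each step in the aggregated coordinates, a lift to $\JJ$ realizing the required inequality. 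I expect the lift-or-cancel case analysis in (1) and the optimal-fiber-representative argument in (2) to be the principal obstacles.
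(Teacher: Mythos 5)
Your proof is essentially correct, but it takes a very different route from the paper: Br\"and\'en's entire proof is the one-line observation that the statement is ``a reformulation of the fact that the operations considered in [Kobayashi--Murota--Tanaka] preserve $M$-convexity,'' i.e.\ he identifies (1)--(4) with the standard operations on $M$-concave functions (splitting, aggregation, translation by a linear function / restriction, and a shift of the support) and cites the literature, whereas you verify the axiom (M) directly. Your reduction of everything to the tropicalization is the right first move (and the nonnegativity of the coefficients is exactly what guarantees no cancellation of leading terms, so that $\trop$ of the new polynomial is computed combinatorially from $\trop(P)$), and your case analyses for (1), (3) and (4) are sound: in particular the lift-or-cancel dichotomy in (1) --- where in case (b) one takes the compensating step in the complementary split coordinate so that $\phi(\gamma+s+t)=\phi(\gamma)$ and the inequality holds with equality --- is precisely the standard proof that splitting preserves $M$-concavity, and your checks that the lifted points stay in $\NN^{n+1}$ (resp.\ have first coordinate $\geq 1$ in (4)) do go through. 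The one place where your argument is not self-contained is (2): max-aggregation is genuinely the hardest of the four operations, and you ultimately defer to Murota's theory with a sketch rather than a complete argument --- which is fair, since that is exactly what the paper does for all four operations at once. The trade-off is clear: the paper's citation is short but opaque; your version makes (1), (3), (4) elementary and transparent at the cost of length, while still leaning on the same external source for the aggregation step.
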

\begin{proof}
The proposition is a reformulation of the fact that the operations considered in \cite{KMT} preserve $M$-convexity. 
\end{proof}
\begin{remark}\label{H-closed}
The class of $H$-stable polynomials is also closed under the operations in Proposition \ref{operations}, 
see \cite{BB,COSW}. 
\end{remark}

\begin{lemma}\label{dist}
Let $\JJ \subseteq \Delta_n^m$ be a constant sum jump system.
The function 
$d_\JJ : \Delta_n^m \rightarrow \QQ$ defined by 
$$
d_\JJ(\alpha) = -\min\{ |\alpha-\beta| : \beta \in \JJ\}
$$
is $M$-concave and  $\JJ = \{\alpha \in \Delta_n^m :   d_\JJ(\alpha) \geq d_\JJ(\beta) \mbox{ for all } \beta \in \Delta_n^m \}$. 
\end{lemma}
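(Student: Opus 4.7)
The second claim is immediate. Since $|\alpha - \beta| \geq 0$ for all $\beta$, we have $d_\JJ(\alpha) \leq 0$, with equality if and only if $\alpha \in \JJ$ (take $\beta = \alpha$). Hence $\JJ$ is exactly the argmax set of $d_\JJ$ on $\Delta_n^m$.

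For $M$-concavity, the plan is to express $d_\JJ$ as a tropical infimal convolution of two simpler $M$-concave functions, and then invoke the standard fact from discrete convex analysis (see \cite{MurotaBook}) that such a convolution preserves $M$-concavity when both summands have constant-sum supports. Let $\chi_\JJ$ be the constant function $0$ on $\JJ$, and let $\phi(\eta) = -|\eta|$ be defined on the zero-sum sublattice $\Lambda = \{\eta \in \ZZ^m : \sum_i \eta_i = 0\}$. Both $\JJ$ and $\Lambda$ are constant-sum jump systems (with sums $n$ and $0$ respectively), and for every $\alpha \in \Delta_n^m$ one checks directly that
$$
\max\big\{\chi_\JJ(\gamma) + \phi(\eta) : \gamma \in \JJ,\ \eta \in \Lambda,\ \gamma + \eta = \alpha\big\} \;=\; -\min_{\gamma \in \JJ} |\alpha - \gamma| \;=\; d_\JJ(\alpha).
$$
The $M$-concavity of $\chi_\JJ$ is trivial, since it is constant on a constant-sum jump system.

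The main step is therefore to show that $\phi$ is $M$-concave on $\Lambda$. By Murota's local characterization it suffices to check \ML{} for pairs $\eta_1, \eta_2 \in \Lambda$ with $|\eta_1 - \eta_2| = 4$, and I would split into cases according to the shape of the zero-sum vector $\eta_1 - \eta_2$: namely $\pm(2 e_i - 2 e_j)$, $\pm(2 e_i - e_j - e_k)$, or $e_i + e_{i'} - e_j - e_{j'}$ (all indices distinct). In the first case the only admissible exchange step $s+t = e_j - e_i$ lands on the exact midpoint $(\eta_1 + \eta_2)/2$, and the required inequality reduces to midpoint convexity of $|\cdot|$. In the intermediate cases the right choice of $(i,j)$ produces a midpoint in a single coordinate, to which the scalar convexity $2|x+1| \leq |x| + |x+2|$ applies. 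In the last, ``generic'' case, a judicious choice of $(i,j)$ merely interchanges the values of two coordinates between $\eta_1$ and $\eta_2$, yielding $|\eta_1'| + |\eta_2'| = |\eta_1| + |\eta_2|$ on the nose. The main (and only) small obstacle is keeping this case analysis tidy; once $\phi$ has been shown $M$-concave on $\Lambda$, the convolution theorem completes the proof.
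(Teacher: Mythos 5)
Your argument is correct, and it reaches the conclusion by a genuinely different (though closely parallel) route. The paper stays inside its polynomial formalism: it takes $P(\zz)=\sum_{\alpha\in\JJ}\zz^\alpha$, which is an $M$-polynomial by definition, performs the substitution $z_i\mapsto z_i+t^{-1}\sum_{j\neq i}z_j$ --- permitted by Proposition \ref{operations}, i.e.\ by the operations of \cite{KMT} --- and reads off $2\nu(b_\alpha(t))=d_\JJ(\alpha)$, since a monomial $\zz^\gamma$ with $\gamma\in\JJ$ contributes to $\zz^\alpha$ with leading exponent $-\tfrac{1}{2}|\alpha-\gamma|$. Your decomposition of $d_\JJ$ as the sup-convolution of $\chi_\JJ$ with $-|\cdot|$ is exactly the tropical shadow of that substitution, so the two proofs perform the same computation; they differ in where the preservation of $M$-concavity is outsourced. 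The paper cites \cite{KMT} wholesale via Proposition \ref{operations}; you instead invoke the convolution theorem for $M$-concave functions with constant-sum supports and verify from scratch, via \ML, that $\eta\mapsto-|\eta|$ is $M$-concave on the zero-sum lattice. Your case analysis is sound: for $\eta_1-\eta_2=2e_i-e_j-e_k$ the choice $s=-e_i$, $t=+e_j$ reduces the claim to $2|x-1|\le|x|+|x-2|$ in coordinate $i$ while merely swapping the two values in coordinate $j$, and in the generic case the swap gives equality on the nose. What you gain is self-containedness and an explicit view of the discrete-convexity mechanism; what the paper gains is brevity and uniformity with the $M$-polynomial machinery reused in the proposition that follows. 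The only loose ends are minor: $\Lambda$ is infinite, whereas \ML{} and the convolution theorem are usually stated for bounded effective domains, so either restrict $\Lambda$ to $\{\eta:\sum_i\eta_i=0,\ |\eta|\le 2n\}$ (still a constant-sum jump system, and all that the convolution over $\Delta_n^m$ ever uses), or simply quote the standard fact from \cite{MurotaBook,KMT} that separable concave functions are $M$-concave and that convolution with them preserves $M$-concavity --- the latter citation would in fact let you skip the case analysis entirely.
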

\begin{proof}
By definition the polynomial $P(z_1,\ldots, z_m)= \sum_{\alpha \in \JJ}\zz^\alpha$ is an $M$-polynomial. By Proposition \ref{operations} so is the polynomial 
$$
Q(\zz) = \sum_{\alpha \in \Delta_n^m}b_\alpha(t)\zz^\alpha = P\left(z_1+t^{-1}\sum_{j=2}^m z_j, \ldots, z_m + t^{-1}\sum_{j=1}^{m-1}z_j\right).
$$
Now $2\nu(b_\alpha(t))=d_\JJ(\alpha)$ for all $\alpha \in \Delta_n^m$ and the lemma follows. 
\end{proof} 

\begin{proposition}
Let $\BB_7 \subset \Delta_3^7$ be the set of bases of the Fano matroid $\FF_7$, see \cite{Ox}.  The function $d_{\BB_7}$ fails Question \ref{dense}. 
\end{proposition}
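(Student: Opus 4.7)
The plan is to show $d_{\BB_7}$ is $M$-concave via Lemma~\ref{dist}, and then derive a contradiction from the hypothesis $d_{\BB_7} = \trop(P)$ by extracting from $P$ a HPP-polynomial $P_0 \in \CC[\zz]$ with support $\BB_7$ and invoking the failure of the Fano matroid $\FF_7$ to have the weak half-plane property.

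Since $\BB_7$ is the set of bases of the rank-$3$ matroid $\FF_7$ on seven elements, it is a constant sum jump system sitting inside $\Delta_3^7$, so Lemma~\ref{dist} yields that $d_{\BB_7}$ is $M$-concave. Moreover $d_{\BB_7}$ attains its maximum value $0$ precisely on $\BB_7$ and is strictly negative elsewhere.

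Suppose for contradiction that $d_{\BB_7} = \trop(P)$ for some HPP-polynomial $P = \sum_{\alpha \in \Delta_3^7} a_\alpha(t)\zz^\alpha \in \PC[\zz]$. Write $a_\alpha(t) = c_\alpha t^{d_{\BB_7}(\alpha)} + (\text{lower order in } t)$, with $c_\alpha \in \CC$ nonzero for $\alpha \in \BB_7$. I would argue that the initial polynomial
$$
P_0(\zz) := \sum_{\alpha \in \BB_7} c_\alpha \zz^\alpha \in \CC[\zz]
$$
inherits the half-plane property from $P$. The natural route is to truncate each Puiseux series $a_\alpha(t)$ to finitely many leading terms, producing a polynomial $P'$ whose coefficients lie in $\CC[t^{1/N},t^{-1/N}]$ for some $N$, whose valuations still equal $d_{\BB_7}$, and which is still HPP (by Hurwitz continuity of the HPP on bounded-degree polynomials together with Tarski's principle). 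Specializing $t$ to real values $t_0 \to \infty$ then gives a sequence of HPP-polynomials in $\CC[\zz]$ whose coefficients converge to those of $P_0$. By Hurwitz's theorem the limit $P_0$ is either HPP or identically zero, and the latter is ruled out since $c_\alpha \neq 0$ for $\alpha \in \BB_7 \neq \varnothing$.

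Thus $P_0$ is a HPP-polynomial over $\CC$ with support equal to the set of bases of $\FF_7$, endowing $\FF_7$ with the weak half-plane property. This contradicts the fact—established in the literature, see for instance \cite{COSW}—that the Fano matroid fails to have the weak half-plane property (being non-representable, indeed non-orientable, over any field of characteristic different from $2$). Hence $d_{\BB_7}$ is not the tropicalization of any HPP-polynomial, answering Question~\ref{dense} in the negative. The main obstacle in this plan is the descent from $P \in \PC[\zz]$ to its initial polynomial $P_0 \in \CC[\zz]$: specialization of a generalized Puiseux series at a real value of $t$ is not automatic, and the combination of truncation, Tarski's principle, and Hurwitz-type limits on the finite-dimensional coefficient space of polynomials of fixed degree is what makes the reduction rigorous.
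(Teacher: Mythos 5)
Your overall strategy is the same as the paper's: assume $\trop(P)=d_{\BB_7}$, pass to the ``initial'' polynomial whose support is exactly $\BB_7$ (since $d_{\BB_7}$ attains its maximum $0$ precisely on $\BB_7$), conclude that $\FF_7$ would have the weak half-plane property, and contradict the known failure of that property. But the step you yourself flag as the main obstacle --- descending from $P\in\PC[\zz]$ to $P_0\in\CC[\zz]$ --- is where your argument has a genuine gap. First, elements of $\PC$ are \emph{formal} generalized Puiseux series with well-ordered sets of real exponents; they need not converge for any real $t$, and there is no common denominator $N$ putting truncations into $\CC[t^{1/N},t^{-1/N}]$. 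More seriously, even granting a truncation $P'$ with finitely many terms per coefficient, you give no argument that $P'$ is still HPP: truncating coefficients (equivalently, perturbing by terms of strictly smaller valuation) is not an operation known to preserve stability, and ``Hurwitz continuity of the HPP together with Tarski's principle'' cannot be invoked here because Hurwitz's theorem applies to limits of honest complex polynomials, which is precisely what you do not yet have. The paper avoids all of this: it polarizes $P$ (Proposition \ref{polarizeit}), applies the inequality characterization of $H_0$-stability over $\PR$ (Proposition \ref{SR}), and observes that a nonnegative Puiseux series has nonnegative leading coefficient, so the inequalities descend to the leading-coefficient polynomial $\widetilde{P}$ over $\RR$. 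An alternative correct route in the spirit of your sketch is to encode, for each real $\epsilon>0$ and a suitable fixed real $C$, the elementary statement ``there is an $H_0$-stable polynomial with support in $\Delta_3^7$ whose coefficient ratios satisfy $|a_\alpha/a_\beta|<\epsilon$ for $\alpha\notin\BB_7$, $\beta\in\BB_7$ and $C^{-1}<|a_\beta/a_{\beta'}|<C$ for $\beta,\beta'\in\BB_7$,'' transfer it to $\CC$ by Tarski, normalize, and only then apply Hurwitz to a convergent sequence of complex polynomials; but as written your reduction is not rigorous.

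Two smaller points. The failure of the weak half-plane property for the Fano matroid is \cite[Theorem 6.6]{Br}, not \cite{COSW} (the latter only shows that the basis generating polynomial with unit coefficients is not HPP, which is weaker). And your parenthetical justification via non-representability cannot be right as stated: the V\'amos matroid is representable over no field yet does have the weak half-plane property, as the paper itself points out, so non-representability alone does not rule out the weak HPP.
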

\begin{proof} 
Suppose that there is a  HPP-polynomial  $P=\sum_{\alpha \in \Delta_3^7} a_\alpha(t) \zz^\alpha$ such that $\supp(P)=\Delta_3^7$ and $\trop(P) = d_{\BB_7}$. Let $\delta=\max \{ \nu(a_\alpha(t)) : \alpha \in \supp(P) \}$ and let $\widetilde{a_\alpha}$ be the coefficient of $\delta$ in $a_\alpha(t)$. It follows from Propositions \ref{polarizeit} and \ref{SR} that the polynomial $\widetilde{P}= \sum_{\alpha \in \Delta_3^7} \widetilde{a_\alpha} \zz^\alpha \in \RR[\zz]$  has the half-plane property. However, by construction, $\supp(\widetilde{P})=\BB_7$. Thus $\FF_7$ has the weak half-plane property which contradicts  \cite[Theorem 6.6]{Br}. 
\end{proof}

\noindent 
\textbf{Acknowledgments.} 
I am grateful to the anonymous referees for valuable comments and for spotting a mistake in the proof of Theorem \ref{Main1}.

\end{document}